\documentclass[a4paper,11pt]{article}

\pagestyle{empty}

  \renewcommand\appendix{\par
  \setcounter{section}{0}
  \setcounter{subsection}{0}
  \setcounter{figure}{0}
  \setcounter{table}{0}
  \renewcommand\thesection{ Appendix \Alph{section}}
  \renewcommand\thefigure{\Alph{section}\arabic{figure}}
  \renewcommand\thetable{\Alph{section}\arabic{table}}
}

 \pagestyle{plain}
 \usepackage{amsmath, amsthm, amssymb}
\usepackage{amssymb}
\setcounter{tocdepth}{3}
\usepackage{graphicx}
\usepackage{algorithmic}
\usepackage{algorithm}

\usepackage{tikz}
\usetikzlibrary{calc}
\usetikzlibrary{patterns}
\tikzstyle{mybox} = [draw=black, fill=white,  thick,
    rectangle, inner sep=10pt, inner ysep=20pt]
\tikzstyle{mybox} = [draw=black, fill=white,  thick,
    rectangle, inner sep=2pt, inner ysep=2pt]

\usepackage[margin=1.2in]{geometry}

\usepackage{setspace}

\newtheorem{definition}{Definition}[section]
\newtheorem{lemma}{Lemma}[section]
\newtheorem{theorem}{Theorem}[section]

\newtheorem{remark}{Remark}[section]



\begin{document}

\title{Lower bounds for the chromatic number of certain Kneser-type  hypergraphs}
\author{Soheil Azarpendar\\ Amir Jafari}

\maketitle

\begin{abstract}
 Let $n\ge 1$, $r\ge 2$, and $s\ge 0$ be integers and ${\cal P}=\{P_1,\dots, P_l\}$ be a partition of $[n]=\{1,\dots, n\}$ with $|P_i|\le r$ for $i=1,\dots, l$.  Also, let $\cal F$ be a family of non-empty subsets of $[n]$. The $r$-uniform Kneser-type hypergraph $\mbox{KG}^r({\cal  F}, {\cal P},s)$ is the hypergraph with the vertex set of all $\cal P$-admissible elements $F\in {\cal F}$, that is $|F\cap P_i|\le 1$ for $i=1,\dots, l$ and the edge set of all $r$-subsets $\{F_1,\dots, F_r\}$ of the vertex set that $|F_i\cap F_j|\le s$ for all $1\le i<j\le r$. In this article, we extend the equitable $r$-colorability defect $\mbox{ecd}^r({\cal F})$ of  Abyazi Sani and Alishahi to the case when one allows intersection among the vertices of an edge. It will be denoted by $\mbox{ecd}^r({\cal F},s)$. We then, give (under certain assumptions) lower bounds for the chromatic number of $\mbox{KG}^r({\cal F}, {\cal P},s)$ and some of its variants in terms of $\mbox{ecd}^r({\cal F},\lfloor s/2\rfloor)$. This work generalizes many existing results in the literature of the Kneser hypergraphs.  It generalizes the previous results of the current authors from the special family of all $k$-subsets of $[n]$ to a general family $\cal F$ of subsets.
 \end{abstract}

\section{Introduction}

Let $n\ge 1$ and $r\ge 2$ be integers and ${\cal P}=\{P_1,\dots, P_l\}$ be a partition of $[n]=\{1,\dots, n\}$ with $|P_i|\le r$ for $i=1,\dots, l$. Let $\cal F$ be a family of non-empty subsets in $[n]$. The $r$-uniform Kneser-type hypergraph $\mbox{KG}^r({\cal  F}, {\cal P})$ is the hypergraph with the vertex set of all $\cal P$-admissible elements $F\in {\cal F}$, that is $|F\cap P_i|\le 1$ for $i=1,\dots, l,$ and the edge set of all $r$-subsets $\{F_1,\dots, F_r\}$ of the vertex set that are pairwise disjoint. This hypergraph first was considered by Alishahi and Hajiabolhassan in \cite{AH}. It was later considered by Aslam, Chen, Coldren, Frick, and  Setiabrata in \cite{F}. For an integer $s\ge 0$, that is assumed to have the property\footnote{Without this assumption, we will have a loop edge $\{F,\dots, F\}$ and the chromatic number of the associated hypergraph is by convention infinity, so there is no need to give a lower bound.} $s<|F|$ for all elements $F$ of $\cal F$, we may relax the condition of pairwise disjointness to $|F_i\cap F_j|\le s$ and arrive at the definition of the $r$-uniform hypergraph  $\mbox{KG}^r({\cal F},{\cal P}, s)$. 
 We are interested here to find lower bounds for the chromatic number $\chi(\mbox{KG}^r({\cal F},{\cal P},s))$ of this hypergraph in terms of a generalization of the equitable $r$-colorability defect of Abyazi Sani and Alishahi \cite{AA}. This result is an extension of the previous results of the current authors in \cite{AJ}.

An equitable partition of a set $X$ is a partition of it into subsets $X_i$ for $i=1,\dots, r$ such that $||X_i|-|X_j||\le 1$ for all $1\le i\le j\le r$. The equitable $r$-colorability defect $\mbox{ecd}^r({\cal F})$ of a family $\cal F$ of non-empty subsets in $[n]$ is the minimum size of a subset $X_0\subseteq [n]$ so that there is an equitable partition
$$[n]\backslash X_0=X_1\cup\dots\cup X_r$$
with the property that there are no elements $F\in {\cal F}$ and $i=1,\dots, r,$ with $F\subseteq X_i$. Abyazi Sani and Alishahi \cite{AA}  proved the following generalization of the corresponding result of Kriz in \cite{Kr} and \cite{Kr'} for the $r$-colorability defect.

\begin{theorem}
One has
$$\chi(\mbox{KG}^r({\cal F}))\ge \left\lceil\frac{\mbox{ecd}^r({\cal F})}{r-1}\right\rceil.$$
\end{theorem}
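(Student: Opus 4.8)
The plan is to run the equivariant-topology argument through a $\mathbb{Z}_p$-Tucker lemma, the combinatorial analogue of the Borsuk--Ulam theorem for free $\mathbb{Z}_p$-actions, treating the prime case $r=p$ as the core. Write $C=\chi(\mathrm{KG}^r(\mathcal F))$ and $D=\mathrm{ecd}^r(\mathcal F)$, and fix a proper coloring of the vertex set with colors $\{1,\dots,C\}$. I would work on the free $\mathbb{Z}_p$-complex whose faces are the ordered $p$-tuples $(A_1,\dots,A_p)$ of pairwise disjoint (possibly empty) subsets of $[n]$, not all empty, with $\mathbb{Z}_p$ acting by cyclically shifting the $A_g$. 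This complex is $(n-2)$-connected and carries a free action, so by the Tucker lemma any equivariant labeling into a codomain of too small dimension must create a rainbow: a chain of faces realizing all $p$ signs at one common value.

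The labeling $\lambda(x)=(\text{value},\text{sign})$ would have two regimes. I would scan the nonempty coordinates of $x=(A_1,\dots,A_p)$ in a fixed order, distributing them among the $p$ parts in balanced rounds so that the induced partition of the support stays equitable. If at some stage a part $A_g$ already contains a $\mathcal P$-admissible member $F\in\mathcal F$, I place $x$ in the color regime, with value the color $c(F)$ (taking the least such color and least such part, to keep $\lambda$ well defined and equivariant) and sign $g$; a rainbow here would furnish $r=p$ pairwise disjoint admissible vertices of one color, i.e.\ a monochromatic hyperedge of $\mathrm{KG}^r(\mathcal F)$, which a proper coloring forbids. Otherwise no part yet contains an admissible member of $\mathcal F$, and I place $x$ in the alternating regime, with value the current depth of the scan and sign the part receiving the last committed element; here the alternating (monotone, same-sign) condition of the Tucker lemma holds by construction. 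The decisive point is that, because the filling is kept equitable, the definition of $D=\mathrm{ecd}^r(\mathcal F)$ forces some part to contain a member of $\mathcal F$ as soon as more than $n-D$ coordinates have been committed; hence the alternating regime reaches depth at most $n-D$, so $\alpha\le n-D$ for the number $\alpha$ of alternating values.

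With this $\lambda$ the color regime contains no rainbow, since a proper coloring admits no monochromatic hyperedge, so the Tucker lemma applies in its no-rainbow form and yields the dimension count $n\le\alpha+C(p-1)$, where each of the $\alpha$ alternating values contributes $1$ and each of the $C$ colors contributes $p-1$. With $p=r$ and $\alpha\le n-D$ this gives $n\le(n-D)+C(r-1)$, that is $D\le C(r-1)$, whence $C\ge D/(r-1)$ and, $C$ being an integer, $C\ge\lceil D/(r-1)\rceil$, as claimed. As a sanity check, for $r=2$ and $\mathcal F=\binom{[n]}{k}$ one has $\alpha=2k-2$ and $D=n-2k+2$, recovering Lov\'asz's value for the Kneser graph, and for general $r$ one recovers the Alon--Frankl--Lov\'asz value for the $r$-uniform Kneser hypergraph.

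I expect the main obstacle to be the construction of the second paragraph: engineering the scan so that the recovered partition is genuinely \emph{equitable}, not merely $\mathcal F$-avoiding, while keeping $\lambda$ exactly $\mathbb{Z}_p$-equivariant and the alternating condition intact, is precisely what upgrades the classical colorability defect to the equitable one and pins the alternating count at $\alpha\le n-D$. A second difficulty is that the clean argument needs $r=p$ prime, since a free $\mathbb{Z}_p$-action requires a prime; for composite $r$ one must invoke the general-$r$ form of this machinery, or a reduction to the prime case, and verify that the equitable bookkeeping survives that passage. This is where the remaining technical work lies.
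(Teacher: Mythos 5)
Your skeleton is the same as the paper's: reduce to prime $r=p$, then apply the $\mathbb{Z}_p$-Tucker lemma (the paper cites Meunier's form of it) to a two-regime equivariant labeling of the faces of $E_{n-1}(\mathbb{Z}_p)$, where a rainbow in the color regime would produce a monochromatic edge and the alternating regime is capped at $\alpha\le n-\mbox{ecd}^r({\cal F})$. But there is a genuine gap exactly at the step you yourself flag as ``the main obstacle'': the alternating-regime labeling is never constructed, and the version you sketch would not satisfy the Tucker hypotheses. First, ``distributing the coordinates among the $p$ parts in balanced rounds'' is not an operation available to the labeling: a face $(A_1,\dots,A_p)$ already dictates which element lies in which part, and $\lambda$ may only read that data. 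Second, your value/sign pair (depth of scan, part receiving the last committed element) must satisfy condition 2 of the lemma: for nested faces $A_1\subseteq A_2$ with equal value at most $\alpha$, the signs must coincide; with a ``last committed element'' sign there is no reason for this, since the scan of the larger face commits different elements and nothing forces its last one into the same part. The paper's resolution (following Abyazi Sani and Alishahi) is a specific encoding: in the alternating regime put $\lambda_2(A)=p\,m+(p-h)$, where $m=\min_i|A^i|$ and $h$ is the number of parts of size $m$, and put $\lambda_1(A)=(i_1\cdots i_h)^{h'}$, the product of the minimum-size part indices viewed as $p$-th roots of unity, raised to the inverse $h'$ of $h$ modulo $p$. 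This simultaneously gives (i) $\lambda_2(A)\le n-\mbox{ecd}^r({\cal F})$, because truncating the larger parts to size $m+1$ yields an \emph{equitable} partition of $\lambda_2(A)$ elements none of whose parts contains a member of $\cal F$; (ii) equivariance, since the action multiplies the product by $\omega^{hh'}=\omega$; and (iii) condition 2, since nested faces with equal $\lambda_2\le\alpha$ have the same set of minimum-size parts. Your sketch contains no device playing this role.

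The second gap is the composite case, which you defer to ``a reduction to the prime case'' without carrying it out. This is not a formality: the paper devotes its Section 2 (a Kriz-style argument) to it, and the delicate point is again equitability — after combining the partition produced by the $r_2$-level defect with the $r_1$-level partitions inside each piece, one must pad the discarded sets $X_{i,0}$ to exact size $(r_1-1)t$ and then verify that all $r_1r_2$ parts $X_{i,j}$ have sizes within one of each other. So your plan points at the right machinery (identical to the paper's), but both places where you locate ``the remaining technical work'' are precisely where the content of the proof lies, and neither is supplied.
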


Here $\mbox{KG}^r({\cal F})$ is the hypergraph with no partition condition, in other words, $\cal P$ is the partition of $[n]$ by the singletons.  Our goal here, is to extend this result to the cases $\mbox{KG}^r({\cal F},{\cal P})$ and $\mbox{KG}^r({\cal F},{\cal P},s)$.

For $s\ge 0$ and subsets $A$ and $B$ of $[n]$, we write $A\subseteq_s B$ if there is a set $E$ of size at most $s$, such that $A\backslash E\subseteq B$. The general $r$-equitable colorability defect $\mbox{ecd}^r({\cal F},s)$ is the minimum size of a subset $X_0\subseteq [n]$, such that there is an equitable partition 
$$[n]\backslash X_0=X_1\cup\dots \cup X_r$$
with the property that there are no $F\in{\cal F}$ and $i=1,\dots, r$, such that $F\subseteq_s X_i$.

\begin{remark}\label{rem0}
Note that $\mbox{ecd}^r({\cal F},0)$ is the original equitable $r$-colorability defect of Abyazi Sani and Alishahi. It is easy to see that for the family of all $k$-subsets of $[n]$, denoted by  $[n]\choose k$, when $n\ge r(k-1)+1$ and $0\le s<k$, one has
$$\mbox{ecd}^r({[n]\choose k},s)=n-r(k-s-1).$$
\end{remark}

We have the following results. 

\begin{theorem}\label{thm1}
Under the above condition on the partition $\cal P$, one has
$$\chi(\mbox{KG}^r({\cal F},{\cal P}))\ge \left\lceil\frac{\mbox{ecd}^r({\cal F})}{r-1}\right\rceil.$$
\end{theorem}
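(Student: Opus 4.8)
The plan is to establish the equivalent inequality directly by the topological (Tucker-type) method: assuming a proper coloring $\kappa$ of $\mbox{KG}^r(\mathcal{F},\mathcal{P})$ with colour set $[c]$, I would show $c(r-1)\ge \mbox{ecd}^r(\mathcal{F})$, which is equivalent to the claimed bound since $c$ is an integer. Note first that one cannot simply quote Theorem~1.1: imposing the partition condition only removes vertices, so $\mbox{KG}^r(\mathcal{F},\mathcal{P})$ is an induced subhypergraph of $\mbox{KG}^r(\mathcal{F})$ and its chromatic number can only drop. The whole point is therefore to recover the \emph{same} lower bound despite having fewer vertices, and the hypothesis $|P_i|\le r$ is exactly what makes this possible.

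As usual I would encode partitions by signed vectors $x\in(\mathbb{Z}_r\cup\{0\})^n\setminus\{\mathbf 0\}$, writing $X_0=\{i:x_i=0\}$ and $X_g=\{i:x_i=g\}$ for $g\in\mathbb{Z}_r$, with the $\mathbb{Z}_r$-action given by coordinatewise multiplication. The key modification is to work not on the full punctured cube but on its $\mathbb{Z}_r$-invariant subcomplex $K$ consisting of those $x$ whose restriction to each block $P_i$ is \emph{injective}. This restriction is designed so that every sign class $X_g$ meets each $P_i$ in at most one point; hence every $F\subseteq X_g$ is automatically $\mathcal{P}$-admissible, so an $F\in\mathcal{F}$ produced inside some $X_g$ by the definition of $\mbox{ecd}$ is a genuine vertex of $\mbox{KG}^r(\mathcal{F},\mathcal{P})$ and has a well-defined colour $\kappa(F)$. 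Using $\kappa$ I would build a $\mathbb{Z}_r$-equivariant label map $\lambda$ on $K$: on vectors where some $X_g$ contains a vertex $F$, set $\lambda(x)=(\kappa(F),g)$, choosing $F$ and the relevant $g$ by a fixed priority rule that respects the action, so that the colour component is sign-independent and $\lambda$ is equivariant; on the remaining vectors, those for which no sign class contains any member of $\mathcal{F}$, record a ``special'' label encoding the support size. By the definition of $\mbox{ecd}^r(\mathcal{F})$, a vector of the latter type has $|X_0|\ge\mbox{ecd}^r(\mathcal{F})$, so its support has size at most $\alpha:=n-\mbox{ecd}^r(\mathcal{F})$; thus the special labels range over $[\alpha]$ and the free labels over $[c]\times\mathbb{Z}_r$.

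Feeding $\lambda$ into the $\mathbb{Z}_r$-Tucker lemma (in the general-$r$ form used by Abyazi Sani and Alishahi) then forces $n\le \alpha+c(r-1)$, i.e. $c(r-1)\ge n-\alpha=\mbox{ecd}^r(\mathcal{F})$, which is the theorem. To justify running the lemma on $K$ rather than on the full cube I would check that $K$ is still $\mathbb{Z}_r$-large enough, and this is precisely where $|P_i|\le r$ enters: $K$ is the $\mathbb{Z}_r$-join over the blocks of the local complexes $K_i$ of injective signed vectors on $P_i$, and since $|P_i|=t_i\le r$ each $P_i$ admits an injective assignment using $t_i$ \emph{distinct nonzero} signs, giving a free $\mathbb{Z}_r$-cell of dimension $t_i-1$ and hence $\mbox{coind}_{\mathbb{Z}_r}(K_i)\ge t_i-1$; summing over the join yields $\mbox{coind}_{\mathbb{Z}_r}(K)\ge\sum_i(t_i-1)+(l-1)=n-1$, the same index as the unconstrained complex. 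Had some $|P_i|$ exceeded $r$, every injective pattern on $P_i$ would be forced to use the zero sign, the top cell would cease to be free, and this index bound would fail, which is the structural reason for the hypothesis.

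The main obstacle is exactly this topological bookkeeping: verifying that the constrained complex $K$ carries the required $\mathbb{Z}_r$-index and that the Tucker-type lemma can be run on it, and, in parallel, that the vectors landing in the special region at the critical cells induce \emph{equitable} partitions, so that the \emph{equitable} defect $\mbox{ecd}^r(\mathcal{F})$ rather than the ordinary colorability defect appears; this equitability should be extracted from the balanced structure of the top cells exactly as in \cite{AA}. Everything else, namely defining the priority rule so that $\lambda$ is genuinely equivariant and total on $K$, and the final arithmetic $c\ge\lceil\mbox{ecd}^r(\mathcal{F})/(r-1)\rceil$, is routine.
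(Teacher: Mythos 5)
Your argument has a fatal gap at its topological core. Everything rests on the claim that the block-injective subcomplex $K$ has $\mathbb{Z}_r$-coindex at least $n-1$, which you deduce from the existence of a free $\mathbb{Z}_r$-cell of dimension $t_i-1$ in each factor $K_i$. That inference is invalid: the $\mathbb{Z}_r$-orbit of such a simplex consists of $r$ pairwise vertex-disjoint simplices, a \emph{disconnected} free $\mathbb{Z}_r$-space whose coindex is $0$ (an equivariant image of a connected space would have to lie in a single simplex and at the same time be invariant under the action, which is impossible), so a free cell of dimension $d$ gives no coindex bound whatsoever. Worse, your conclusion is actually false in the range the theorem must cover. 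Your $K_i$ is the chessboard complex $\Delta_{t_i,r}$ of partial injective maps $P_i\to\mathbb{Z}_r$, and already for $r=2$ and a block of size $2$ (allowed, since the hypothesis is $|P_i|\le r$) the complex $\Delta_{2,2}$ is two disjoint edges, with coindex $0$, not $1$. Consequently, for $r=2$ with all blocks of size $2$, the complex $K$ is equivariantly homotopy equivalent to $(\mathbb{Z}_2)^{*(n/2)}\cong S^{n/2-1}$, so any Tucker-type lemma run on $K$ can only produce a bound of the form $c(r-1)\ge n/2-\alpha$. Concretely, take ${\cal F}$ to be the family of all singletons of $[n]$, $r=2$, and blocks of size $2$: then $\mbox{ecd}^2({\cal F})=n$ and $\mbox{KG}^2({\cal F},{\cal P})$ is the complete graph $K_n$, but your method cannot yield more than $\chi\ge n/2$. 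So the restricted-complex route is not merely under-justified; it provably cannot reach the stated bound (the connectivity of $\Delta_{m,n}$, namely $\min\{m,n,\lfloor(m+n+1)/3\rfloor\}-2$, is simply too small when $t_i$ is close to $r$; your heuristic would only stand a chance for blocks of size up to roughly $(r+1)/2$).

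This is precisely the difficulty the paper's proof is engineered to avoid: it never restricts the complex. Theorem \ref{thm1} is obtained as the $s=0$ case of Theorem \ref{main}, which is proved by first reducing to prime $r$ (Lemma \ref{ind}, a Kriz-style product argument that your sketch also needs but does not supply, since the $\mathbb{Z}_p$-Tucker lemma is a prime-$p$ statement), and then running the $\mathbb{Z}_p$-Tucker lemma on the \emph{full} complex $\mbox{E}_{n-1}(\mathbb{Z}_p)$. When a face $A$ contains no suitable $F\in{\cal F}$ in any sign class, the paper selects the unique maximal sub-face $B\subseteq A$ all of whose sign classes are $\cal P$-admissible, defines $\lambda_2$ from the equitized sizes $|B^i|$ (this is where the equitability you deferred is absorbed), and defines $\lambda_1$ by genuinely group-theoretic labels such as $(i_1\cdots i_h)^{h'}$; the hypothesis $|P_j|\le p$ enters only in verifying the consistency condition (2) of Lemma \ref{tucker} (Case 2.2.2, where it forces $B_1'=B_2'$), not in preserving the index of any subcomplex. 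To salvage your plan you would have to do the same: keep the full complex and push the admissibility constraint into the label map rather than into the domain.
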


\begin{theorem}\label{thm2}
Under the above condition on $s$, one has 
$$\chi(\mbox{KG}^r({\cal F},s))\ge \left\lceil\frac{\mbox{ecd}^r({\cal F},\lfloor s/2 \rfloor)}{r-1}\right\rceil.$$
Here the partition is understood to be trivial, in other words, by the singletons. 
\end{theorem}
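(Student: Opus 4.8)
The plan is to adapt the topological proof of the $s=0$ theorem of Abyazi Sani and Alishahi, the only genuinely new work being in the combinatorial ``capture'' step, where the role of ordinary set containment is now played by the relation $\subseteq_{\lfloor s/2\rfloor}$. First I would reduce to the case where $r=p$ is prime: the passage from prime $r$ to arbitrary $r\ge 2$ is the same as for $\mbox{ecd}^r({\cal F})$ and does not interact with $s$, so it suffices to treat $p$ prime. Write $N=\mbox{ecd}^p({\cal F},\lfloor s/2\rfloor)$ and let $c$ be a proper coloring of $\mbox{KG}^p({\cal F},s)$ using $C=\chi(\mbox{KG}^p({\cal F},s))$ colors. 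The goal is to prove $(p-1)C\ge N$, which immediately gives $C\ge\lceil N/(p-1)\rceil$.

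The tool is the $\mathbb{Z}_p$-Tucker lemma. I would build a $\mathbb{Z}_p$-equivariant map
$$\lambda:(\mathbb{Z}_p)^n_*\longrightarrow \mathbb{Z}_p\times[\,C+(n-N)\,],$$
where a point of the domain is a tuple $x=(A_1,\dots,A_p)$ of pairwise disjoint subsets of $[n]$, not all empty, and $\mathbb{Z}_p$ acts by cyclically shifting the parts. Say a part $A_j$ \emph{captures} $F\in{\cal F}$ if $F\subseteq_{\lfloor s/2\rfloor}A_j$. The first $C$ labels are reserved for tuples in which some part captures some vertex: there $\lambda$ records the minimum captured color in the $[C]$-coordinate and, in the $\mathbb{Z}_p$-coordinate, the sign supplied by the capturing parts, resolved exactly as in the base case. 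The remaining $n-N$ labels handle the no-capture tuples through the support, the equitability of the induced partition entering precisely as it does for $\mbox{ecd}^p({\cal F})$.

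The heart of the matter, and the only place where the value $\lfloor s/2\rfloor$ is forced, is checking that $\lambda$ meets the hypotheses of the $\mathbb{Z}_p$-Tucker lemma. Two combinatorial facts drive this, and both rest on one computation. If $F\subseteq_{\lfloor s/2\rfloor}A_i$ and $F'\subseteq_{\lfloor s/2\rfloor}A_j$ with $i\ne j$, write $F\setminus E\subseteq A_i$ and $F'\setminus E'\subseteq A_j$ with $|E|,|E'|\le\lfloor s/2\rfloor$; since $A_i\cap A_j=\emptyset$, every element of $F\cap F'$ lies in $E\cup E'$, so $|F\cap F'|\le 2\lfloor s/2\rfloor\le s$. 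Taking $F=F'$ gives $|F|\le s$, contradicting the standing assumption $s<|F|$, so each $F$ is captured by \emph{at most one} part; this is what makes the capturing sign well defined. Taking $F\ne F'$ shows that if all $p$ parts captured vertices $F_1,\dots,F_p$ of one common color, then $\{F_1,\dots,F_p\}$ would be a monochromatic edge of $\mbox{KG}^p({\cal F},s)$, contradicting properness of $c$; hence at most $p-1$ parts ever capture a given color. These are exactly the two properties of disjoint containment used in the $s=0$ proof, so with ``capture'' replacing ``containment'' the verification of equivariance, the sign condition on the first $C$ labels, and the chain condition on the last $n-N$ labels goes through unchanged.

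Finally, the count $n-N$ is controlled by $N$: a no-capture tuple induces a partition of its support, none of whose parts $X_i$ satisfies $F\subseteq_{\lfloor s/2\rfloor}X_i$ for any $F\in{\cal F}$, so by the very definition of $\mbox{ecd}^p({\cal F},\lfloor s/2\rfloor)=N$ its support has size at most $n-N$ once equitability is arranged as in the base argument; this is the single point at which $\mbox{ecd}^p({\cal F},\lfloor s/2\rfloor)$ replaces $\mbox{ecd}^p({\cal F})$. The $\mathbb{Z}_p$-Tucker lemma then yields $n\le (p-1)C+(n-N)$, i.e. $(p-1)C\ge N$, whence $C\ge\lceil N/(p-1)\rceil$. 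I expect the main obstacle to be not the topology, which is imported wholesale from the $s=0$ case, but confirming that relaxing disjointness to $|F_i\cap F_j|\le s$ leaves the two structural properties intact: the choice $\lfloor s/2\rfloor$ is dictated exactly by $2\lfloor s/2\rfloor\le s$, and the hypothesis $s<|F|$ is what forces captures to be unique. A secondary item to check carefully is that the equitability bookkeeping in the no-capture labels, and the prime-to-general-$r$ reduction, transfer without alteration.
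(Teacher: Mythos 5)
Your prime case is, in substance, the paper's own proof (Section 3): the same $\mathbb{Z}_p$-Tucker scheme on faces of $\mbox{E}_{n-1}(\mathbb{Z}_p)$, with ``capture'' meaning $F\subseteq_{\lfloor s/2\rfloor}A^i$, and the two facts you isolate are exactly the two points where the paper's argument departs from the $s=0$ case: uniqueness of the capturing part (forced by $s<|F|$), and $|F\cap F'|\le 2\lfloor s/2\rfloor\le s$ for vertices captured by disjoint parts, which is what makes a monochromatic edge out of $p$ distinct capturing signs. One cosmetic slip: you reserve the \emph{first} $C$ labels for capture tuples, but in the $\mathbb{Z}_p$-Tucker lemma the labels obeying the ``same sign along inclusions'' condition must be the initial segment $\{1,\dots,\alpha\}$ and the labels obeying the ``not all signs distinct along chains'' condition must come after $\alpha$; your own final count $n\le (p-1)C+(n-N)$ shows you intend $\alpha=n-N$, so the two blocks just need to be swapped.

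The genuine gap is the reduction to prime $r$, which you dismiss as ``the same as for $\mbox{ecd}^r({\cal F})$,'' claiming it ``does not interact with $s$'' and ``transfers without alteration.'' It does not transfer verbatim, and this is precisely where the paper does new work (Lemma 2.1). In Kriz's $s=0$ reduction one restricts the family to a part $X$ by taking $\{F\in{\cal F}: F\subseteq X\}$; once containment is relaxed to $\subseteq_{s'}$ (with $s'=\lfloor s/2\rfloor$), a set that is nearly contained in $X$ need not lie in $X$ at all, and no naive restriction keeps the induction inside $X$. The paper's fix is to introduce the trace family ${\cal F}(X,s')=\{A\subseteq X\ :\ \exists F\in{\cal F},\ A\subseteq F\subseteq_{s'}A\}$, and the induction hypothesis is then applied \emph{with $s=0$} to these trace families (for $r_1$) and to the derived family ${\cal F}'=\{X\ :\ \mbox{ecd}^{r_1}({\cal F}(X,s'))>(r_1-1)t\}$ (for $r_2$) --- not to $\cal F$ itself with parameter $s$. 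Moreover, the $s$-specific computation you place entirely in the prime case reappears here: properness of the induced coloring of $\mbox{KG}^{r_2}({\cal F}')$ rests on $F_i(X_j)\subseteq_{s'}B_i(X_j)\subseteq X_j$ with the $B_i$'s disjoint inside each $X_j$ and the $X_j$'s disjoint, giving pairwise intersections at most $2s'\le s$; and the final assembly of the equitable partitions needs the observation that $F\subseteq_{s'}X_{i,j}$ would force $F\cap X_{i,j}\in{\cal F}(X_i,s')$. Without this (or an equivalent device), your argument establishes Theorem 2 only for prime $r$ and stalls at composite values such as $r=4$ or $r=6$.
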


Unfortunately to give a unified theorem that deals with the case of $\mbox{KG}^r({\cal F},{\cal P},s)$ we need to either assume that the pair $({\cal F},{\cal P})$ satisfies an extra condition or modify the definition of the hypergraph into $\widetilde{\mbox{KG}}^r({\cal F},{\cal P},s)$ as follows.
\\
\begin{definition}
The pair $({\cal F},{\cal P})$ is said to be $s$-good, if for any $\cal P$-admissible subset $A$ for which there exists $F\in \cal F$ so that $F\subseteq_s A$, one can find a $\cal P$-admissible element $F'\in {\cal F}$ such that $F'\subseteq_s A$.
\end{definition}

\begin{remark}\label{rem1}
Let us show that the pair $({[n]\choose k}, {\cal P})$ is $s$-good, if $n\ge r(k-1)+1$, $0\le s<k$, and $|P_i|\le r$. Note that by the assumption on $n$, we have at least $k$ non-empty partition parts in $\cal P$. Now suppose $F\subseteq_s A$ for a $\cal P$-admissible subset $A$ and a $k$-subset $F$. If $|A|\ge k$ any $k$-subset $F'$ of $A$ is $\cal P$-admissible. Hence we may assume, $k-s\le |A|\le k$ and therefore one can always add at most $s$ elements to $A$ from different partition parts with empty intersection with $A$, to make it into a $\cal P$-admissible $k$-subset $F'$ with $F'\subseteq_s A'$.
\end{remark}

Without any assumptions on the partition and the family, we need to modify the definition of the hypergraph $\mbox{KG}^r({\cal F},{\cal P},s)$ as follows. 

\begin{definition}
We let $\widetilde{\mbox{KG}}^r({\cal F},{\cal P},s)$ be the $r$-uniform hypergraph with the vertex set of all elements  $A$ of $\cal F$ such that
$$\sum_{i=1}^l \max{\{|A\cap P_i|-1,0\}}\le \lfloor s/2 \rfloor$$
 and the edge set of all $r$-subsets $\{A_1,\dots, A_r\}$ of vertices such that $|A_i\cap A_j|\le s$ for all $1\le i<j\le r$.
 \end{definition}
  Note that when $s=0$, the above condition is the same as $\cal P$-admissibility. Also if $\cal P$ is the trivial partition into singletons, this condition holds for all $A\in \cal F$.
  \\
 We have the following two results.
\begin{theorem}\label{main}
Under the above assumptions, one has
$$\chi(\widetilde{\mbox{KG}}^r({\cal F},{\cal P},s))\ge \left\lceil\frac{\mbox{ecd}^r({\cal F},\lfloor s/2 \rfloor)}{r-1}\right\rceil.$$
\end{theorem}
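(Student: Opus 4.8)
The plan is to adapt the topological proof of the Abyazi Sani--Alishahi theorem \cite{AA}, whose engine is the $\mathbb{Z}_p$-Tucker lemma, simultaneously to the intersection parameter $s$ and to the partition $\mathcal{P}$. As in that argument it suffices to treat $r=p$ prime. Assume a proper coloring $c$ of $\widetilde{\mbox{KG}}^r(\mathcal{F},\mathcal{P},s)$ with $C$ colors, write $V=\{A\in\mathcal{F}:\sum_i\max\{|A\cap P_i|-1,0\}\le\lfloor s/2\rfloor\}$ for its vertex set and $d=\mbox{ecd}^r(\mathcal{F},\lfloor s/2\rfloor)$. The goal is to produce a $\mathbb{Z}_p$-equivariant labeling to which the Tucker lemma applies with codomain $\mathbb{Z}_p\times[\alpha+C]$ and threshold $\alpha=n-d$, so that its conclusion $n\le\alpha+C(p-1)$ rearranges to $C(p-1)\ge d$, i.e.\ $C\ge\lceil d/(r-1)\rceil$.

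The first ingredient is the ambient space. Rather than the full cube I would work on the $\mathbb{Z}_p$-subspace $\Sigma_{\mathcal{P}}$ of sign vectors that assign pairwise distinct nonzero values to elements of a common part $P_i$; since $|P_i|\le r=p$ every part can be placed, the $\mathbb{Z}_p$-action is free off the origin, and crucially each class $A_a(x)=\{i:x_i=a\}$ is then $\mathcal{P}$-admissible. This is exactly what makes the coloring usable: if some $F\in\mathcal{F}$ satisfies $F\subseteq_{\lfloor s/2\rfloor}A_a(x)$, writing $F\setminus E\subseteq A_a(x)$ with $|E|\le\lfloor s/2\rfloor$ and using admissibility of $A_a(x)$ gives $\max\{|F\cap P_i|-1,0\}\le|E\cap P_i|$ for every $i$, hence $\sum_i\max\{|F\cap P_i|-1,0\}\le|E|\le\lfloor s/2\rfloor$, so $F$ is already a vertex of $\widetilde{\mbox{KG}}^r(\mathcal{F},\mathcal{P},s)$. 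This bridging is precisely why the vertex set is enlarged by $\lfloor s/2\rfloor$ and why no $s$-goodness hypothesis is needed.

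With this in hand I define $\lambda=(\lambda_1,\lambda_2)$ on $\Sigma_{\mathcal{P}}\setminus\{0\}$ as follows. If some class $A_a(x)$ has $F\subseteq_{\lfloor s/2\rfloor}A_a(x)$ for some $F\in\mathcal{F}$, I pick canonically such a pair and set $\lambda_2(x)=\alpha+c(F)\in\{\alpha+1,\dots,\alpha+C\}$ and $\lambda_1(x)=a$; otherwise I place $x$ in the ``defect regime'' with $\lambda_2(x)\le\alpha$ recording the (equitable) position data and $\lambda_1(x)$ its leading value, following the equitable bookkeeping of \cite{AA}. The color regime verifies Tucker's second hypothesis: a $\preceq$-chain $x^{(1)}\preceq\cdots\preceq x^{(p)}$ with common color $c$ and pairwise distinct labels $a_1,\dots,a_p$ yields vertices $F_1,\dots,F_p\in V$ with $F_j\setminus E_j\subseteq A_{a_j}(x^{(p)})$ and $|E_j|\le\lfloor s/2\rfloor$; as the classes $A_{a_j}(x^{(p)})$ are disjoint, $|F_j\cap F_k|\le|E_j|+|E_k|\le 2\lfloor s/2\rfloor\le s$, so $\{F_1,\dots,F_p\}$ is a monochromatic edge, impossible. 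This is the step that forces the parameter $\lfloor s/2\rfloor$. The defect regime verifies the first hypothesis and, via the equitable encoding, guarantees that a configuration with no $F\in\mathcal{F}$ lying $\subseteq_{\lfloor s/2\rfloor}$ in any class uses at most $\alpha=n-d$ nonzero coordinates: more would exhibit an admissible equitable partition of some $[n]\setminus X_0$ with $|X_0|<d$ and no $F\in\mathcal{F}$ with $F\subseteq_{\lfloor s/2\rfloor}X_i$ for any class $X_i$, contradicting the definition of $d$ (admissibility only restricts the competing partitions, so the bound remains in terms of $\mbox{ecd}^r(\mathcal{F},\lfloor s/2\rfloor)$ itself).

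I expect the principal difficulties to be topological and bookkeeping-theoretic rather than in the above set inequalities. Concretely: (i) one must establish the correct $\mathbb{Z}_p$-Tucker lemma on $\Sigma_{\mathcal{P}}$, i.e.\ show that this partition-respecting space supports the same conclusion as the standard cube, equivalently that its relevant $\mathbb{Z}_p$-index equals $n-1$ --- it is here that the hypothesis $|P_i|\le r$ is consumed, since it lets each part embed into $\mathbb{Z}_p$ and keeps the dimension count at $n$; and (ii) one must merge the equitable encoding of \cite{AA} with this space and with the canonical, $\mathbb{Z}_p$-equivariant selection of the pair $(a,F)$ in the color regime, so that both $\preceq$-monotonicity hypotheses hold simultaneously. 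Assembling these --- especially reconciling equitability, admissibility of the classes, and equivariance at once --- is the crux; the intersection layer itself contributes only the clean inequality $2\lfloor s/2\rfloor\le s$ used above.
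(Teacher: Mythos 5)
Much of your plan coincides with the paper's actual proof: the reduction to prime $r=p$, Meunier's $\mathbb{Z}_p$-Tucker lemma with $\alpha=n-\mbox{ecd}^p({\cal F},\lfloor s/2\rfloor)$ and $m=\alpha+C$, the bridging inequality showing that $F\subseteq_{\lfloor s/2\rfloor}A$ with $A$ admissible forces $\sum_j\max\{|F\cap P_j|-1,0\}\le\lfloor s/2\rfloor$ (which is indeed why no goodness hypothesis is needed), and the estimate $2\lfloor s/2\rfloor\le s$ in the monochromatic-edge contradiction. The fatal divergence is your item (i): you replace the ambient complex by the partition-respecting space $\Sigma_{\cal P}$ and claim its $\mathbb{Z}_p$-index is $n-1$, asserting that this is where $|P_i|\le r$ is consumed. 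That claim is false. Your $\Sigma_{\cal P}$ is the join, over parts, of chessboard complexes $\Delta_{|P_i|,p}$ (partial injective assignments $P_i\to\mathbb{Z}_p$), and these lose connectivity, and index, once $|P_i|$ is large relative to $p$. Smallest counterexample, fully inside the theorem's hypotheses: $p=2$, $n=2$, ${\cal P}=\{\{1,2\}\}$ (allowed, since $|P_1|=2=r$). Then $\Sigma_{\cal P}=\Delta_{2,2}$ consists of two disjoint closed edges interchanged by the $\mathbb{Z}_2$-action, so it has index $0$: the labeling with constant $\lambda_2=1$ and $\lambda_1$ recording which edge a face lies in satisfies all the Tucker hypotheses with $m=\alpha=1$, yet $\alpha+(m-\alpha)(p-1)=1<2=n$. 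So the Tucker-type lemma you defer to step (i) is simply not true on $\Sigma_{\cal P}$, and your argument only yields the weaker bound with $n$ replaced by $1+\mbox{ind}(\Sigma_{\cal P})$. The deficiency is not special to $p=2$: the (tight) connectivity formula for chessboard complexes says $\Delta_{q,p}$ is only $(\min\{q,\lfloor (q+p+1)/3\rfloor\}-2)$-connected, so parts of size $q>(p+1)/2$ already break the count; this is exactly why \cite{F}, which does restrict the space, must assume $|P_i|\le r-1$. Your remark that each part ``embeds into $\mathbb{Z}_p$'' only gives $\dim\Sigma_{\cal P}=n-1$, which is an upper bound for the index, not the lower bound you need.

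The paper's proof sidesteps this entirely: it runs the $\mathbb{Z}_p$-Tucker lemma on the full complex $E_{n-1}(\mathbb{Z}_p)$, whose index is $n-1$ unconditionally, and pushes the partition constraint into the labeling rather than into the space. In the defect regime it passes, inside the definition of $\lambda$, to the maximal sub-face $B\subseteq A$ all of whose classes $B^i$ satisfy $|B^i\cap P_j|\le 1$, applies your same ecd-plus-bridging argument to $B$, and then defines $\lambda_1$ equivariantly by an explicit combinatorial device: $(i_1\cdots i_h)^{h'}$ with $hh'\equiv 1\pmod p$ when $i_1,\dots,i_h$ are the classes of minimal size and $h<p$, and otherwise data read off from the first part meeting $\pi_2(B)$. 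The hypothesis $|P_j|\le p$ is consumed combinatorially in the last subcase (it forces $B_1'=B_2'$ along chains, making $\lambda_1$ consistent), not topologically. If you want to repair your proposal, this is the move to import: keep the full complex and handle admissibility inside the labeling; restricting the domain cannot prove the theorem at the stated generality $|P_i|\le r$.
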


\begin{theorem}\label{main'}
If the pair $({\cal F},{\cal P})$ is $\lfloor s/2\rfloor$-good, then 
$$\chi({\mbox{KG}}^r({\cal F},{\cal P},s))\ge \left\lceil\frac{\mbox{ecd}^r({\cal F},\lfloor s/2 \rfloor)}{r-1}\right\rceil.$$
\end{theorem}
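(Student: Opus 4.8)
The plan is to rerun the proof of Theorem~\ref{main} essentially verbatim and to intervene at exactly one place, namely where that argument exhibits a monochromatic edge, replacing the relaxed vertices of $\widetilde{\mbox{KG}}^r({\cal F},{\cal P},s)$ by genuine $\cal P$-admissible vertices of $\mbox{KG}^r({\cal F},{\cal P},s)$ with the help of the $\lfloor s/2\rfloor$-goodness hypothesis. Note first that every $\cal P$-admissible $F\in{\cal F}$ satisfies $\sum_{i}\max\{|F\cap P_i|-1,0\}=0\le\lfloor s/2\rfloor$, so $\mbox{KG}^r({\cal F},{\cal P},s)$ is the subhypergraph of $\widetilde{\mbox{KG}}^r({\cal F},{\cal P},s)$ induced on the $\cal P$-admissible vertices. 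Since passing to an induced subhypergraph can only lower the chromatic number, Theorem~\ref{main} by itself gives the inequality in the wrong direction; the goodness hypothesis is precisely what lets us recover the bound.

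Concretely, I would assume a proper coloring $c$ of $\mbox{KG}^r({\cal F},{\cal P},s)$ with $t$ colors and aim to deduce $t(r-1)\ge\mbox{ecd}^r({\cal F},\lfloor s/2\rfloor)$. I would set up the same $\mathbb{Z}_p$-equivariant labeling and the same appeal to the $\mathbb{Z}_p$-Tucker lemma as in Theorem~\ref{main}, using the hypothesis $|P_i|\le r$ to arrange that the equitable partition $[n]\setminus X_0=X_1\cup\dots\cup X_r$ produced by the construction has each nonempty part $X_i$ itself $\cal P$-admissible (each $P_i$, having at most $r$ elements, is spread at most one-per-block across the $r$ parts). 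The labeling is built from $c$ and $\cal F$ by recording, for each point, the first block $X_i$ that $\lfloor s/2\rfloor$-contains a vertex of the hypergraph, together with the color of that vertex. The only point at which the vertex set of $\mbox{KG}^r$ rather than $\widetilde{\mbox{KG}}^r$ matters is here: in Theorem~\ref{main} one may record any $F\in{\cal F}$ with $F\subseteq_{\lfloor s/2\rfloor}X_i$, and since $X_i$ is $\cal P$-admissible such an $F$ automatically satisfies $\sum_j\max\{|F\cap P_j|-1,0\}\le\lfloor s/2\rfloor$ and so is already a vertex of $\widetilde{\mbox{KG}}^r$.

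For $\mbox{KG}^r$ I would instead apply $\lfloor s/2\rfloor$-goodness: the part $X_i$ is $\cal P$-admissible and admits some $F\in{\cal F}$ with $F\subseteq_{\lfloor s/2\rfloor}X_i$, so goodness furnishes a $\cal P$-admissible $F_i'\in{\cal F}$ with $F_i'\subseteq_{\lfloor s/2\rfloor}X_i$, and this $F_i'$ is a legitimate vertex of $\mbox{KG}^r({\cal F},{\cal P},s)$ whose color I record. Fixing one such choice canonically at every point keeps the labeling $\mathbb{Z}_p$-equivariant and preserves the consistency conditions needed for the Tucker lemma, so the argument of Theorem~\ref{main} now outputs $r$ $\cal P$-admissible elements $F_1',\dots,F_r'\in{\cal F}$ of a common $c$-color, with $F_k'\subseteq_{\lfloor s/2\rfloor}X_{i_k}$ for $r$ distinct, pairwise disjoint parts $X_{i_1},\dots,X_{i_r}$. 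Writing $F_k'\subseteq X_{i_k}\cup R_k$ with $|R_k|\le\lfloor s/2\rfloor$ and using $X_{i_a}\cap X_{i_b}=\emptyset$, one gets $F_a'\cap F_b'\subseteq R_a\cup R_b$, hence $|F_a'\cap F_b'|\le 2\lfloor s/2\rfloor\le s$; thus $\{F_1',\dots,F_r'\}$ is a genuine edge of $\mbox{KG}^r({\cal F},{\cal P},s)$, and it is monochromatic, contradicting the properness of $c$ whenever $t(r-1)<\mbox{ecd}^r({\cal F},\lfloor s/2\rfloor)$.

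The main obstacle I anticipate is not the goodness substitution itself, which is immediate once everything is in place, but verifying that it can be inserted without disturbing the hypotheses of the underlying Tucker-type lemma: one must check that a canonical goodness-witness $F_i'$ can be chosen so that the resulting labeling remains equivariant and still monotone along the face relation used in Theorem~\ref{main}, and that the parts $X_i$ stay $\cal P$-admissible at every stage of the construction, which is exactly where the assumption $|P_i|\le r$ is consumed. I would therefore spend most of the write-up re-examining the labeling of Theorem~\ref{main} rather than the combinatorics of edges, since the intersection estimate above is the same doubling $s\mapsto\lfloor s/2\rfloor$ that already drives Theorems~\ref{thm2} and~\ref{main}.
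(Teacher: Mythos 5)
Your proposal is correct and follows essentially the same route as the paper: the paper proves Theorems~\ref{main} and~\ref{main'} by one and the same $\mathbb{Z}_p$-Tucker argument (after the reduction to prime $r$), in which the only changes for Theorem~\ref{main'} are that the Case-1 witnesses are required to be ${\cal P}$-admissible (so that their $c$-colors are defined) and that $\lfloor s/2\rfloor$-goodness is invoked exactly where you place it --- to upgrade a witness $F\subseteq_{\lfloor s/2\rfloor}B^{i_k}$ sitting inside a ${\cal P}$-admissible part to a ${\cal P}$-admissible one --- with the same $2\lfloor s/2\rfloor\le s$ estimate yielding the monochromatic edge. The only point left implicit in your sketch is that the reduction lemma (Lemma~\ref{ind}) itself also requires this goodness intervention when defining the induced coloring $c_0$, which the paper handles by a parallel ``resp.'' clause and which your ``rerun verbatim and intervene'' plan covers in spirit.
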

Note that theorem \ref{main} implies as its special cases, theorems \ref{thm1} and \ref{thm2}.
\begin{remark}
In \cite{Da}, Daneshpajouh presents the following lower bound for the chromatic number of the hypergraph $\mbox{KG}^r({[n]\choose k},s)$, when $0\le s<k$ and $n\ge r(k-1)+1$
$$\chi(\mbox{KG}^r({[n]\choose k},s))\ge\left\lceil \frac{n-r(k-s-1)}{r-1}\right\rceil.$$
When $n\ge r(k-1)+1$, then $\mbox{ecd}^r({[n]\choose k},s)=n-r(k-s-1)$ and hence, this is a stronger lower bound than the one obtained from Theorem \ref{thm2}. It is feasible that the above theorems remain true if one replaces $\lfloor s/2\rfloor$ with $s$.
\end{remark}

{\textbf{Acknowledgement.}} It is a great pleasure to thank Saeed Shaebani, whose careful reading of the first draft of this article and his precise comments, saved the authors from some blunders and improved the quality of this text. 
\section{Reduction of Theorem \ref{main} and Theorem \ref{main'}}

In this section, we prove the following lemma, which reduces the proof of Theorem \ref{main} and Theorem \ref{main'}, to the case when $r$ is a prime number. The proof is obtained by imitating a method used by Kriz in \cite{Kr'}, who himself followed a similar method used by Alon, Frankl, and Lov\'asz in \cite{AFL}.

\begin{lemma}\label{ind}
If Theorem \ref{main} (resp. Theorem \ref{main'}) is true for $r=r_1$ and $r=r_2$ then Theorem \ref{main} (resp. Theorem \ref{main'}) is true for $r=r_1r_2$.
\end{lemma}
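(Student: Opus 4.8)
The plan is to imitate the Alon--Frankl--Lov\'asz/Kriz multiplicativity trick in the form used in \cite{AJ}: starting from a proper colouring of the $r_1r_2$-uniform hypergraph, I build a proper colouring of an $r_1$-uniform Kneser-type hypergraph over an auxiliary ``blocked'' family, apply the $r_1$-case to it, and then refine the resulting equitable $r_1$-partition of $[n]$ into an equitable $r_1r_2$-partition by applying the $r_2$-case inside each part. I treat Theorem \ref{main} and Theorem \ref{main'} in parallel, since the constructions coincide. Write $H=\widetilde{\mbox{KG}}^{r_1r_2}(\mathcal{F},\mathcal{P},s)$ (resp.\ $\mbox{KG}^{r_1r_2}(\mathcal{F},\mathcal{P},s)$ with $(\mathcal{F},\mathcal{P})$ being $\lfloor s/2\rfloor$-good) and fix a proper colouring $c\colon V(H)\to[t]$ with $t=\chi(H)$. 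Since $t$ is an integer, it suffices to prove $t(r_1r_2-1)\ge \mbox{ecd}^{r_1r_2}(\mathcal{F},\lfloor s/2\rfloor)$; the bookkeeping is organised around the identity $r_1r_2-1=(r_1-1)+r_1(r_2-1)$.

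For the auxiliary construction, call a monochromatic $r_2$-block any edge $e=\{F_1,\dots,F_{r_2}\}$ of $\widetilde{\mbox{KG}}^{r_2}(\mathcal{F},\mathcal{P},s)$ (resp.\ of $\mbox{KG}^{r_2}$) on which $c$ is constant, and write $U(e)=F_1\cup\cdots\cup F_{r_2}$ and $j(e)$ for its common colour. Let $\mathcal{G}$ be the family of all such unions $U(e)$, regarded together with the \emph{trivial} partition into singletons; this last choice is the key device, because under it every subset of $[n]$ is admissible, so the bloated unions $U(e)$ automatically qualify as vertices (for Theorem \ref{main} the defining inequality on the vertices of $\widetilde{\mbox{KG}}$ becomes vacuous, and for Theorem \ref{main'} the pair $(\mathcal{G},\text{singletons})$ is trivially $\lfloor s/2\rfloor$-good), while the defect $\mbox{ecd}^{r_1}(\mathcal{G},\cdot)$ does not see the partition at all. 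The crucial observation is that $j$ is a proper $t$-colouring of the $r_1$-uniform hypergraph whose edges are the $r_1$-tuples of blocks all of whose $r_1r_2$ constituents are pairwise $s$-far: were such an $r_1$-tuple monochromatic, its constituents would form a monochromatic edge of $H$, contradicting the properness of $c$. Sending each union in $\mathcal{G}$ to a chosen block realising it gives a hypergraph homomorphism from $\mbox{KG}^{r_1}(\mathcal{G},\text{singletons},s)$ into this hypergraph (here I use that $|U(e_a)\cap U(e_b)|\le s$ forces all cross-constituent intersections to be $\le s$), so $\chi(\mbox{KG}^{r_1}(\mathcal{G},\text{singletons},s))\le t$. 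The $r_1$-case now yields $\mbox{ecd}^{r_1}(\mathcal{G},\lfloor s/2\rfloor)\le t(r_1-1)$.

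To refine, fix an optimal equitable partition $[n]\setminus X_0=X_1\cup\cdots\cup X_{r_1}$ witnessing $\mbox{ecd}^{r_1}(\mathcal{G},\lfloor s/2\rfloor)=|X_0|$. By construction no part $X_a$ satisfies $U(e)\subseteq_{\lfloor s/2\rfloor}X_a$ for a monochromatic block $e$; in particular no monochromatic $r_2$-block sits inside $X_a$, so the restriction of $c$ to the vertices lying in $X_a$ is a proper $t$-colouring of the appropriate $r_2$-uniform hypergraph on ground set $X_a$. Applying the $r_2$-case to this restriction gives $\mbox{ecd}^{r_2}(\mathcal{F}|_{X_a},\lfloor s/2\rfloor)\le t(r_2-1)$ for every $a$. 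Hence I may delete from each $X_a$ a set $Y_a$ of common size $Y=\max_a\mbox{ecd}^{r_2}(\mathcal{F}|_{X_a},\lfloor s/2\rfloor)\le t(r_2-1)$ and split $X_a\setminus Y_a$ equitably into $r_2$ parts avoiding $\mathcal{F}$; because the $|X_a|$ differ by at most $1$ and the deleted sets have equal size, the resulting $r_1r_2$ parts differ in size by at most $1$ and thus form an equitable partition of $[n]\setminus(X_0\cup Y_1\cup\cdots\cup Y_{r_1})$ avoiding $\mathcal{F}$. Therefore
$$\mbox{ecd}^{r_1r_2}(\mathcal{F},\lfloor s/2\rfloor)\le |X_0|+\sum_{a=1}^{r_1}|Y_a|\le t(r_1-1)+r_1\,t(r_2-1)=t(r_1r_2-1),$$
which is the required inequality.

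The step I expect to be the main obstacle is the definitional matching hidden in the symbol $\mathcal{F}|_{X_a}$ and in the two phrases ``monochromatic $r_2$-block inside $X_a$'' and ``avoiding $\mathcal{F}$''. One must restrict the family and partition to $X_a$ in a single way that simultaneously (a) makes every monochromatic $r_2$-edge inside $X_a$ a genuine block, so that its union lands in $\mathcal{G}$ and forces $c|_{X_a}$ to be proper, and (b) makes $\mathcal{F}$-avoidance of the fine $r_1r_2$-partition a consequence of $\mathcal{F}|_{X_a}$-avoidance of the $r_2$-subpartitions; both requirements involve tracking the $\subseteq_{\lfloor s/2\rfloor}$-slack and the admissibility (resp.\ $\lfloor s/2\rfloor$-goodness) conditions across the restriction, and it is here that the choice of threshold really has to be checked rather than asserted. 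Once these intersection and admissibility inequalities are verified the argument closes verbatim, and since primality of $r_1$ or $r_2$ was never used, the conclusion holds for $r=r_1r_2$.
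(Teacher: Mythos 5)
Your outer step is sound: the family $\mathcal{G}$ of unions of monochromatic $r_2$-blocks, taken with the trivial partition, does carry a proper $t$-colouring of $\mbox{KG}^{r_1}(\mathcal{G},\mbox{singletons},s)$ (distinctness of the $r_1r_2$ constituents follows from $|F|>s$, and goodness is vacuous for singletons), so the $r_1$-case gives $\mbox{ecd}^{r_1}(\mathcal{G},\lfloor s/2\rfloor)\le t(r_1-1)$. The proof breaks at the inner step, in exactly the two places you flagged but deferred, and neither can in fact be ``verified'': they fail. First, the partition problem. To apply the $r_2$-case inside a part $X_a$ you need a partition of $X_a$ with pieces of size at most $r_2$ whose admissible vertices are still vertices of the original hypergraph, since otherwise $c$ is not even defined on them. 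The only partition for which the vertex sets match is $\mathcal{P}|_{X_a}=\{P_i\cap X_a\}$; but $|P_i|$ may be as large as $r_1r_2$, and nothing in your construction constrains how $X_a$ meets the $P_i$ (the outer defect was computed with the trivial partition on $\mathcal{G}$), so pieces $P_i\cap X_a$ of size exceeding $r_2$ can occur and the hypothesis of the theorem for $r=r_2$ is unavailable. Refining the partition inside $X_a$ restores the size bound but creates admissible vertices that are not vertices of $H$, hence uncoloured. Second, the slack mismatch. Write $s'=\lfloor s/2\rfloor$. For the final combined partition you must exclude every $F\in\mathcal{F}$ with $F\subseteq_{s'}Z$ for a fine part $Z\subseteq X_a$, including $F$'s that stick out of $X_a$ by up to $s'$ elements; the restricted family $\{F\in\mathcal{F}:F\subseteq X_a\}$ cannot see these, so its defect bound is useless in the combination step. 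If instead you use the trace family $\{A\subseteq X_a:\exists F\in\mathcal{F},\ A\subseteq F\subseteq_{s'}A\}$, the combination step works, but your inner-properness argument collapses: a monochromatic inner edge now only produces a block whose union satisfies $U\subseteq_{r_2s'}X_a$, not $U\subseteq_{s'}X_a$, so it does not contradict the outer defect conclusion. And you cannot enlarge the outer defect parameter to $r_2s'$, because properness of your colouring $j$ on $\mbox{KG}^{r_1}(\mathcal{G},\sigma)$ requires $\sigma\le s$, hence outer defect parameter at most $s'$.

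These are not bookkeeping issues; they are the reason the paper's proof is structured differently. There the auxiliary family is $\mathcal{F}'=\{X\subseteq[n]:\mbox{ecd}^{r_1}(\mathcal{F}(X,s'))>(r_1-1)t\}$, a purely defect-theoretic, colouring-free condition on $X$, so that the parts of the final outer partition --- which satisfy no admissibility condition whatsoever --- yield inner defect bounds for free from $X_i\notin\mathcal{F}'$, with no further application of the theorem inside $X_i$. The chromatic-to-defect implication for $r_1$ is invoked only at the vertices of the outer hypergraph, which are admissible with respect to a refinement $\mathcal{P}'$ of $\mathcal{P}$ into pieces of size at most $r_2$; that admissibility is precisely what forces $|X\cap P_i|\le r_1$ and legitimises the inner partition $\mathcal{P}|_X$, while the trace family $\mathcal{F}(X,s')$ handles the $\subseteq_{s'}$ slack in the combination step. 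Your construction discards both devices (the defect-defined outer family and the refined partition $\mathcal{P}'$), and with them the mechanism that makes the inner step legal; repairing it means rebuilding the argument along the paper's lines rather than checking inequalities in yours.
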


\begin{proof}
Let $s'=\lfloor s/2\rfloor$ and ${\cal P}=\{P_1,\dots, P_l\}$ be a partition of $[n]$ with $|P_i|\le  r_1r_2$.  Also, let ${\cal P}'=\{P'_1,\dots, P'_{l'}\}$ be a partition obtained from $\cal P$ by partitioning each $P_i$ into at most $r_1$ pieces of sizes less than or equal to $r_2$. For $X\subseteq [n]$, define

$${\cal F}(X,s)=\{A\subseteq X\:|\:\mbox{There exists}\: F\in{\cal F}\:\mbox{such that}\:\:A\subseteq F\subseteq_s A\}.$$

We also define a new family
$${\cal F}'=\{X\subseteq [n]\; | \quad \mbox{ecd}^{r_1}({\cal F}(X,s'))>(r_1-1)t\}$$
where $t=\chi(\widetilde{\mbox{KG}}^{r_1r_2}({\cal F},{\cal P},s))$ (resp. $t=\chi({\mbox{KG}}^{r_1r_2}({\cal F},{\cal P},s))$) and let $c$ be a proper coloring of its vertices into $\{1,\dots, t\}$. Suppose $X\in{\cal F}'$ is a vertex of $\mbox{KG}^{r_2}({\cal F}',{\cal P}')$, then for each $P_i\in {\cal P}$, one has $|X\cap P_i|\le r_1$ so ${\cal P}|_X := \{P_1\cap X,\dots, P_l\cap X\}$ is  a partition of $X$ with each piece of size at most $r_1$. By the hypothesis of the lemma, for such an $X$, $\chi(\widetilde{\mbox{KG}}^{r_1}({\cal F}(X,s'), {\cal P}|_X, 0))>t$. The induced coloring $c_0$ on the ${\cal P}$-admissible elements $A\in {\cal F}(X,s')$ is defined as follows. According to the definition, let $F\in \cal F$ be such that $F\subseteq_{s'} A$, then $F$ is a vertex of $\widetilde{\mbox{KG}}^{r_1r_2}({\cal F},{\cal P},s)$ and define $c_0(A)=c(F)$, in the first case. In the case of Theorem \ref{main'}, by the goodness assumption on the pair $({\cal F},{\cal P})$, we can find a $\cal P$-admissible $F'$ such that $F'\subseteq_{s'} A$ and define $c_0(A)=c(F')$.
\\
Since $c_0$ is not a proper coloring, it follows that one may find vertices $$B_1(X),\dots, B_{r_1}(X)$$ of $\widetilde{\mbox{KG}}^{r_1}({\cal F}(X,s'),{\cal P}|_X,0)$ that are pairwise disjoint and have the same color. Define the coloring $c'$ for 
$\mbox{KG}^{r_2}({\cal F}',{\cal P}')$ by $c'(X)=c_0(B_1(X))$.  Note that for each $B_i(X)$, one has a vertex $F_i(X)$ of $\widetilde{{\mbox{KG}}}^{r_1r_2}({\cal F},{\cal P},s)$ (resp. of $ {\mbox{KG}}^{r_1r_2}({\cal F},{\cal P},s)$) such that $F_i(X)\subseteq_{s'} B_i(X)$, with $c(F_1(X))=\dots=c(F_{r_1}(X))$. Then $c'$ is a proper coloring since otherwise there exists pairwise disjoint vertices $X_1,\dots , X_{r_2}$ with the same color, and hence the $r_1r_2$ subsets $F_i(X_j)$ of $\cal F$ have pairwise intersection of at most $s$ elements and the same color. This contradicts the properness of the coloring $c$. So by the hypothesis of the lemma, $\mbox{ecd}^{r_2}({\cal F}')\le (r_2-1)t$. Hence, one may find $X_0\subseteq [n]$ of size at most $(r_2-1)t$ and an equitable partition
$$[n]\backslash X_0=X_1\cup\dots\cup X_{r_2}$$
with the property that no $X\in {\cal F}'$ is a subset of one of $X_1,\dots, X_{r_2}$. So in particular for $1\le i \le r_2$, $X_i\not\in {\cal F}'$ and hence $\mbox{ecd}^{r_1}({\cal F}(X_i,s'))\le (r_1-1)t$. This implies the existence of a subset $X_{i,0}\subseteq X_i$ of size at most $(r_1-1)t$ and an equitable partition 
$$X_i\backslash X_{i,0}=X_{i,1}\cup\dots \cup X_{i,r_1}$$
such that no $A\in {\cal F}(X_i,s')$ is a subset of one of $X_{i,1},\dots, X_{i,r_1}$. We may assume that $|X_{i,0}|=(r_1-1)t$, since if $|X_{i,0}|<(r_1-1)t$, remove one element from an $X_{i,j}$ for $j=1,\dots, r_1$ with the largest size and add it to the $X_{i,0}$ without violating any of the conditions. By repeating this process, we may assume $|X_{i,0}|=(r_1-1)t$ for $i=1,\dots, r_2$. If now $|X_{i,j}|-|X_{i',j'}|>1$ for some $1\le i,i'\le r_2$ and $1\le j,j'\le r_1$, then it follows that $|X_i|-|X_{i'}|>1$, which is a contradiction. The reason for this, is that if we let $a=|X_{i,j}|$, the minimum size that $X_i$ can have is $a+(r_1-1)(a-1)+t(r_1-1)$, and the maximum size that $X_{i'}$ can have is $a-2+(r_1-1)(a-1)+(r_1-1)t$.

 It follows that we have an equitable partition
\[[n]\backslash X_0'= X_{1,1}\cup\dots\cup X_{1,r_1}\cup \dots\cup X_{r_2,1}\cup\dots \cup X_{r_2,r_1}\]
 where 
\[X_0'=X_0\cup X_{1,0}\dots\cup X_{r_2,0}\]
is of size at most
\[(r_2-1)t+r_2(r_1-1)t=(r_1r_2-1)t\]
and this partition has the property that is no $F\in {\cal F}$ such that $F\subseteq_{s'} X_{i,j}$ for some $i=1,\dots, r_2$ and $j=1,\dots, r_1$. Since otherwise, $A=F\cap X_{i,j}\in {\cal F}(X_i,s')$ and $A\subseteq X_{i,j}$, which is a contradiction.This shows that $\mbox{ecd}^{r_1r_2}({\cal F},s')$ is less than or equal to $(r_1r_2-1)t$ or in other words, $t$ is greater than or equal to $\frac{\mbox{ecd}^{r_1r_2}({\cal F},s')}{r_1r_2-1}$. This proves the lemma.

\end{proof}

\section{Proof of Theorem \ref{main} and Theorem \ref{main'}}

To prove Theorems \ref{main} and \ref{main'}, hence we may suppose that $r=p$ is a prime number. We use $\mathbb Z_p$-Tucker lemma. We recall its statement from \cite{M}. The simplicial complex $\mbox{E}_{n-1}(\mathbb Z_p)$ has $\mathbb Z_p\times [n]$ as its vertices and all subsets $A\subseteq {\mathbb Z_p}\times [n]$  with pairwise different second components as faces. It has a free action of $\mathbb Z_p$ that acts on the first component of each vertex  by multiplication. We take $\mathbb Z_p$ to be the multiplicative group of all $p$th roots of unity.

\begin{lemma}\label{tucker}{\textbf{ ($\mathbb Z_p$-Tucker Lemma)}} Let $n, m>0$ and $m\ge \alpha\ge 0$ be integers and $p$ be a prime number. If $\lambda$ is a map from the non-empty faces of $\mbox{E}_{n-1}(\mathbb Z_p)$ to $\mathbb Z_p\times [m]$ with $\lambda(A)=(\lambda_1(A),\lambda_2(A))\in \mathbb Z_p\times [m]$ that satisfies the following properties,
\begin{enumerate}
\item If $\omega\in \mathbb Z_p$ and $A$ is a non-empty face of $E_{n-1}(\mathbb Z_p)$, then $\lambda_1(\omega\cdot A)=\omega\cdot \lambda_1(A)$ and $\lambda_2(\omega\cdot A)=\lambda_2(A)$. That is $\lambda$ is $\mathbb Z_p$-equivariant.
\item If $A_1\subseteq A_2$ be non-empty faces of $E_{n-1}(\mathbb Z_p)$ and $\lambda_2(A_1)=\lambda_2(A_2)\le \alpha$ then $\lambda_1(A_1)=\lambda_1(A_2)$.
\item If $A_1\subseteq \dots\subseteq A_p$ be non-empty faces of $E_{n-1}(\mathbb Z_p)$ and $\lambda_2(A_1)=\dots=\lambda_2(A_p)>\alpha$ then $\lambda_1(A_1),\dots, \lambda_1(A_p)$ are not pairwise distinct.
\end{enumerate}
then $$\alpha+(m-\alpha)(p-1)\ge n.$$
\end{lemma}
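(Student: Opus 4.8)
The statement is the standard $\mathbb{Z}_p$-Tucker lemma, and the plan is to read it as the combinatorial shadow of a non-existence-of-equivariant-maps theorem. I would argue by contradiction: assuming $\alpha+(m-\alpha)(p-1)<n$, I would use $\lambda$ to manufacture a $\mathbb{Z}_p$-equivariant map from $\mbox{E}_{n-1}(\mathbb{Z}_p)$ into a free $\mathbb{Z}_p$-complex of dimension $\alpha+(m-\alpha)(p-1)-1\le n-2$, which is impossible. The relevant feature of the source is that $\mbox{E}_{n-1}(\mathbb{Z}_p)$ is the $n$-fold join of the $p$-point discrete free $\mathbb{Z}_p$-space; hence it is an $(n-1)$-dimensional, $(n-2)$-connected, free $\mathbb{Z}_p$-simplicial complex.

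For the target I would take the complex $Y$ with vertex set $\mathbb{Z}_p\times[m]$ whose faces are the sets $S\subseteq \mathbb{Z}_p\times[m]$ that contain at most one pair with second coordinate $j$ for each $j\le\alpha$, and pairs with at most $p-1$ distinct first coordinates for each $j>\alpha$, with $\mathbb{Z}_p$ acting by multiplication on the first coordinate. By construction a face meets each of the first $\alpha$ columns in at most one vertex and each of the remaining $m-\alpha$ columns in at most $p-1$ vertices, so $\dim Y=\alpha+(m-\alpha)(p-1)-1$. The action is free precisely because $p$ is prime: a nonempty face fixed by some $\omega\ne 1$ would have to be invariant under $\langle\omega\rangle$, forcing either two distinct vertices in a column $j\le\alpha$ or a full orbit of all $p$ signs in a column $j>\alpha$, both of which are excluded.

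The map itself I would define on the barycentric subdivision $\mathrm{sd}\,\mbox{E}_{n-1}(\mathbb{Z}_p)$, sending the vertex corresponding to a face $A$ to $\lambda(A)=(\lambda_1(A),\lambda_2(A))\in\mathbb{Z}_p\times[m]$. Condition (1) makes this vertex map $\mathbb{Z}_p$-equivariant. To see that it is simplicial I would take a chain $A_1\subsetneq\cdots\subsetneq A_k$ and check that $\{\lambda(A_1),\dots,\lambda(A_k)\}$ is a face of $Y$: for a column $j\le\alpha$, condition (2) forces all chain members with $\lambda_2=j$ to share the same first coordinate, contributing a single vertex; for a column $j>\alpha$, if $p$ distinct first coordinates occurred one could extract a nested subfamily of length $p$ violating condition (3), so at most $p-1$ distinct vertices occur. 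Thus the images land in $Y$ and we obtain a $\mathbb{Z}_p$-map $\mathrm{sd}\,\mbox{E}_{n-1}(\mathbb{Z}_p)\to Y$.

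Finally I would invoke Dold's theorem (equivalently, $\mathrm{ind}_{\mathbb{Z}_p}\mbox{E}_{n-1}(\mathbb{Z}_p)\ge n-1$): there is no $\mathbb{Z}_p$-equivariant map from an $(n-2)$-connected free $\mathbb{Z}_p$-space to a free $\mathbb{Z}_p$-complex of dimension at most $n-2$. Since $Y$ is free, its dimension must be at least $n-1$, that is $\alpha+(m-\alpha)(p-1)\ge n$, which is the claim. The main obstacle is the bookkeeping in the middle two steps: isolating the target complex whose two defining constraints translate conditions (2) and (3) into \emph{exactly} the freeness of the action and the stated dimension bound, and confirming that nested chains map to genuine faces. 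For $p=2$ one could instead run a direct combinatorial parity argument, but for odd primes there is no such shortcut, so the equivariant-topology input is essential and the delicate part is matching the combinatorial hypotheses to the hypotheses of Dold's theorem precisely.
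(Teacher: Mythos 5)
The paper itself gives no proof of this lemma: it is quoted as a known result from Meunier's paper \cite{M} and used as a black box, so there is no internal argument to compare yours against. Your proof is correct, and it is in substance the standard equivariant-topology proof that underlies the cited literature. The three key steps all check out: (i) your target complex $Y$, which is the join $(\mathbb{Z}_p)^{*\alpha} * (\partial\sigma^{p-1})^{*(m-\alpha)}$ written column-by-column, has dimension $\alpha+(m-\alpha)(p-1)-1$, and primality of $p$ gives freeness, since a face invariant under some $\omega\ne 1$ would have to be a union of full vertex orbits $\mathbb{Z}_p\times\{j\}$, which either column rule forbids; (ii) conditions (2) and (3) are exactly what makes $A\mapsto \lambda(A)$ simplicial on the barycentric subdivision: in a chain, all members with the same value $\lambda_2=j\le\alpha$ share $\lambda_1$ by (2), and for $j>\alpha$ the occurrence of $p$ distinct $\lambda_1$-values would produce a nested subchain of length $p$ violating (3); (iii) $\mbox{E}_{n-1}(\mathbb{Z}_p)=(\mathbb{Z}_p)^{*n}$ is $(n-2)$-connected and $|\mathrm{sd}\,\mbox{E}_{n-1}(\mathbb{Z}_p)|=|\mbox{E}_{n-1}(\mathbb{Z}_p)|$, so Dold's theorem forces $\dim Y\ge n-1$, i.e.\ $\alpha+(m-\alpha)(p-1)\ge n$. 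Two cosmetic remarks: Dold's theorem does not require the source action to be free, only the target's, so the word ``free'' in your statement of it is superfluous (though true here); and your final comment that the lemma needs equivariant topology for odd $p$ is consistent with the literature, where purely combinatorial proofs exist for $p=2$ (Tucker--Ky Fan) but the general prime case is handled topologically, as you do.
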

Now let us present our proof for Theorem \ref{main} (resp. Theorem \ref{main'}).

\begin{proof}

Let $t=\chi(\widetilde{\mbox{KG}}^p({\cal F},{\cal P},s))$ (resp. $t=\chi({\mbox{KG}}^p({\cal F},{\cal P},s))$ for the case of Theorem \ref{main'}) and let $c$ be a coloring of the vertices of this hypergraph with colors $\{1,\dots, t\}$. Let  $s'=\lfloor s/2\rfloor$, $\alpha=n-\mbox{ecd}^p({\cal F},s')$ and $m=\alpha+t$. 
Also for simplicity choose a complete  ordering on non-empty subsets of $[n]$, that has the property that if $|A|<|B|$ then $A<B$.

We define a $\mathbb Z_p$-equivariant map $\lambda$ from the non-empty faces of $\mbox{E}_{n-1}(\mathbb Z_p)$ to ${\mathbb Z}_p\times [m]$ that satisfies the two properties of the $\mathbb Z_p$-Tucker lemma and hence
$$\alpha+(m-\alpha)(p-1)=n-\mbox{ecd}^p({\cal F},s')+(p-1)t\ge n$$
and hence the result follows.  For a non-empty face $A$ of $\mbox{E}_{n-1}(\mathbb Z_p)$ and $i\in \mathbb Z_p$, let $A^i=\{1\le j\le n| (i,j)\in A\}$. The definition of $\lambda(A)=(\lambda_1(A),\lambda_2(A))\in \mathbb Z_p\times [m]$ is given in two cases.
\\
{\textbf{Case 1:}} If there is an element $F\in \cal F$ with $F\subseteq_{s'} A^i$ for some $i\in \mathbb Z_p$ and $$\sum_{i=1}^l \max{\{|F\cap P_j|-1,0\}}\le s'$$ (resp. $F$ is $\cal P$-admissible) for all $1\le j\le l$, then choose the smallest 
such subset with respect to the complete ordering on subsets of $[n]$, say $F\subseteq_{s'} A^i$ and define $$\lambda(A)=(i, c(F)+\alpha).$$ 
We remark that since $|F|>s$, one can not have more than one $i\in \mathbb Z_p$ that $F\subseteq_{s'} A^i$.
\\
{\textbf{Case 2:}} Otherwise, choose a non-empty subset $B\subseteq A$ such that for all $i\in \mathbb Z_p$ and $j=1,\dots, l$, $|B^i\cap P_j|\le 1$ and $\pi_2(B)$ is maximum with respect to the chosen complete order on subsets of $[n]$, this is clearly unique. Here $\pi_2:\mathbb Z_p\times [n]\rightarrow [n]$ is the projection onto the second component. Also, assume that $$|B^{i_1}|=\dots=|B^{i_h}|<|B^{i_{h+1}}|\le\dots \le |B^{i_p}|$$ for some $1\le h\le p$, where $h=p$ means that all the sizes are equal. Define $$\lambda_2(A)=p|B^{i_1}|+p-h.$$ Note that $\lambda_2(A)\le \alpha$. This is because by removing elements from $B^{i_{h+1}}.\dots, B^{i_p}$ (if there are any)  arbitrarily, we may assume that their sizes are $|B^{i_1}|+1$ to arrive at an equitable partition of a set of size $\lambda_2(A)$. If $\lambda_2(A)$ is greater than $n-\mbox{ecd}^p({\cal F},s')$, then by the definition of $\mbox{ecd}^p({\cal F},s')$ there is an element $F\in {\cal F}$ with $F\subseteq_{s'} B^{i_k}$ for some $k=1,\dots, p,$ and therefore
$$\sum_{i=1}^l \max{\{|F\cap P_j|-1,0\}}\le s'$$
 (resp. $F$ can be chosen so that it is $\cal P$-admissible by the $s'$-goodness assumption). This contradicts the fact that we are in the Case 2. 

The definition of  $\lambda_1(A)$ is more delicate. We define it in several sub-cases.
\\
 {\textbf{Case 2.1:}} If $h<p$, find $1\le h'<p$ such that $hh'\equiv 1\mod p$ and define 
$$\lambda_1(A)=(i_1\dots i_h)^{h'}.$$
{\textbf{Case 2.2:}} If $h=p$, find the smallest $1\le j\le l$ that $\pi_2(B)\cap P_j$ is non-empty, and take the unique subset $B'\subseteq B$ such that $\pi_2(B')=\pi_2(B)\cap P_j$. Let $\pi_1(B')=\{j_1,\dots, j_k\}$, where $\pi_1$ is the projection onto the first component. Then we have again two sub-cases:
\\
 {\textbf{Case 2.2.1:}} If $k<p$, choose $1\le k'<p$ such that $kk'\equiv 1\mod p$ and define:
$$\lambda_1(A)=(j_1\dots j_k)^{k'}.$$
 {\textbf{Case 2.2.2:}} If $k=p$, define $\lambda_1(A)$ to be the first component of the element of $B'$ with the smallest second component.

It remains to check the properties of the $\mathbb Z_p$-Tucker lemma. First, $\lambda$ is $\mathbb Z_p$-equivariant in the Case 1.  That is $\lambda_1(\omega \cdot A)=\omega\cdot \lambda_1(A)$ and $\lambda_2(\omega \cdot A)=\lambda_2(A)$ for any $\omega\in \mathbb Z_p$. This is because, if $F\subseteq A^i$ is the required subset for $A$ in case one then $F\subseteq (\omega A)^{\omega\cdot i}$ is the required subset for $\omega\cdot A$. 
\\
If $A_1\subseteq \dots\subseteq A_p$ is a chain of non-empty faces of $\mbox{E}_{n-1}(\mathbb Z_p)$ with $\lambda_2(A_1)=\dots=\lambda_2(A_p)>\alpha$, then we are in the Case 1. Hence with have vertices $F_1,\dots, F_p$ of $\mbox{KG}^p({\cal F},{\cal P},s)$ with $F_i\subseteq_{s'} A_i^{\lambda_1(A_i)}$ with $c(F_1)=\dots=c(F_p)$.  If $\lambda_1(A_1),\dots, \lambda_1(A_p)$ are pairwise distinct, then since $A_i^{\lambda_1(A_i)}\cap A_j^{\lambda_1(A_j)}=\emptyset$ for $i\ne j$ then $|F_i\cap F_j|\le 2s'\le s$ and  $\{F_1,\dots,F_p\}$ will be a mono-chromic edge, which contradicts properness of $c$. Hence the third condition of the $\mathbb Z_p$-Tucker lemma holds.
\\
To show that $\lambda$ is $\mathbb Z_p$-equivariant in Case 2, note that if $B\subseteq A$ is the required set  for $A$, then $\omega\cdot B_1\subseteq \omega\cdot B_2$ is the required set in for $\omega\cdot A$, hence $\lambda_2(A)=\lambda_2(\omega\cdot A)$. Also, the corresponding $\{i_1,\dots, i_h\}$ will be $\{\omega\cdot i_1,\dots, \omega\cdot i_h\}$. In the Case 2.1, we have
$$\lambda_1(\omega\cdot A)=((\omega\cdot i_1)\dots (\omega\cdot i_h))^{h'}=\omega^{hh'}\cdot (i_1\dots i_h)^{h'}=\omega\cdot \lambda_1(A).$$
In Case 2.2, we have $\omega\cdot B'$ as the corresponding set for $\omega\cdot A$. So in both Cases 2.2.1 and 2.2.2 it follows that $\lambda_1(\omega\cdot A)=\omega\cdot \lambda_1(A)$. This proves that $\lambda$ is $\mathbb Z_p$-equivariant.
\\
If $A_1\subseteq A_2$ are non-empty faces of $\mbox{E}_{n-1}(\mathbb Z_p)$ with $\lambda_2(A_1)=\lambda_2(A_2)\le \alpha$, then we are in the second case. With maximal subsets $B_1\subseteq A_1$ and $B_2\subseteq A_2$. Assume that
$$|B_1^{i_1}|=\dots=|B_1^{i_h}|<|B_1^{i_{h+1}}|\le\dots \le |B_1^{i_p}|$$
$$|B_2^{j_1}|=\dots=|B_2^{j_k}|<|B^{j_{k+1}}|\le\dots \le |B_2^{j_p}|$$
for some $1\le h\le p$ and $1\le k\le p$. If $\lambda_2(A_1)=\lambda_2(A_2)$, then $|B_1^{i_1}|=|B_2^{j_1}|$ and $h=k$. Now since $B_1\subseteq A_1\subseteq A_2$, by the maximality of $B_2$, we have $|B_1^i|\le |B_2^i|$. Therefore $\{i_1,\dots, i_h\}=\{j_1,\dots, j_h\}$. So in Case 2.1 we must have $\lambda_1(A_1)=\lambda_1(A_2)$.
\\
If we are in Case 2.2, then $|B_1^i|=|B_2^i|$ for all $i\in {\mathbb Z}_p$ and hence $|B_1|=|B_2|$. This implies that the first $1\le j\le l$ that $\pi_2(B_1)\cap P_j$ is non-empty is the same as the first $1\le j'\le l$ that $\pi_2(B_2)\cap P_{j'}$ is non-empty. So by the maximality and equality of $|B_1|=|B_2|$, it follows that $\pi_1(B_1')=\pi_1(B_2')$. In the Case 2.2.1 therefore $\lambda_1(A_1)=\lambda_1(A_2)$. Finally, in the Case 2.2.2 since $|P_j|\le p$, it follows that $B_1'=B_2'$ and hence the first component of the element with the smallest second component in both of them are the same, that is $\lambda_1(A_1)=\lambda_1(A_2)$. This finishes checking the conditions and hence the proof of the theorem is complete. 

\end{proof}
\section{A generalization of a theorem of Abyazi Sani and Alishahi}

In this section, using Theorem \ref{thm1}, we generalize Theorem 3 of Abyazi Sani and Alishahi in \cite{AA}.  For an integer vector $S=(s_1,\dots,s_n)$ with $0\le s_i\le r$, the notion of an $S$-disjoint  multi-set $\{A_1,\dots, A_r\}$ of subsets of $[n]$ was considered by Sarkaria and Ziegler in \cite{S}, \cite{Z}, and \cite{Z'}. It means that for all $1\le i\le n$, the number of $1\le j\le r$ that $i\in A_j$ is at most $s_i$. This generalizes the notion of pairwise disjoint that is just $S=(1,1,\dots, 1)$-disjoint. Ziegler \cite{Z} extended the $r$-colorability defect of a family $\cal F$ of subsets of $[n]$, $\mbox{cd}^r(\cal F)$, to 
the $S$-disjoint $r$-colorability defect  $\mbox{cd}_S^r(\cal F)$. This was also extended by Abyazi Sani and Alishahi \cite{AA} to the equitable $S$-disjoint $r$-colorability defect $\mbox{ecd}_S^r(\cal F)$ which is defined as follows. Let $\bar{n}=\sum_{i=1}^n s_i$. Then $\mbox{ecd}_S^r({\cal F})$ is defined by
$$\bar{n}-\max{\left\{\sum_{i=1}^r |A_i|\:|\: \{A_1,\dots,A_r\}\:\; \mbox{equitable and}\:\: S\mbox{-disjoint} \:\forall F\in {\cal F}, 1\le i\le r \;\: F\not\subseteq A_i\right \}}.$$
For a subset $P$ of $[n]$, we define the $S$-weight of $P$ to be
$$w_S(P)=\sum_{i\in P} s_i.$$
For a partition ${\cal P}=\{P_1,\dots, P_l\}$ of $[n]$, we also define the $r$-uniform Kneser-type hypergraph $\mbox{KG}_S^r({\cal F},{\cal P})$ to be a hypergraph with the vertex set of those $A\in {\cal F}$ that have at most one element from each $P_1,\dots, P_l$ and the edge set of all multi-sets $\{A_1,\dots, A_r\}$ of the vertices that are $S$-disjoint. We then have the following theorem.
\begin{theorem}
If the partition ${\cal P}=\{P_1,\dots, P_l\}$ has the property that the $S$-weight of each partition piece is at most $r$, then one has
$$\chi(\mbox{KG}_S^r({\cal F},{\cal P}))\ge \left\lceil\frac{\mbox{ecd}^r_S({\cal F})}{r-1}\right\rceil.$$
\end{theorem}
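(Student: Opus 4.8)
The plan is to reduce the $S$-disjoint problem to the ordinary pairwise-disjoint problem and then invoke Theorem \ref{thm1}, following the blow-up idea of Sarkaria and Ziegler. First I would replace the ground set $[n]$ by a set $V$ containing $s_i$ distinct copies of each $i\in[n]$, so that $|V|=\bar{n}=\sum_{i=1}^n s_i$, and let $\pi\colon V\to[n]$ be the projection sending every copy of $i$ to $i$. The governing dictionary is that a multiset $\{A_1,\dots,A_r\}$ of subsets of $[n]$ is $S$-disjoint exactly when the $A_j$ admit pairwise disjoint lifts to $V$: one distributes the at most $s_i$ occurrences of $i$ among its $s_i$ copies. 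This turns $S$-disjointness into genuine disjointness.

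The key construction is a family on $V$ whose equitable $r$-colorability defect matches $\mbox{ecd}^r_S({\cal F})$. I would set
$$\widetilde{\cal F}=\{\,\widetilde A\subseteq V:\ \pi|_{\widetilde A}\text{ injective},\ \pi(\widetilde A)\in{\cal F}\,\}\ \cup\ \{\,\{v,w\}:\ v\neq w,\ \pi(v)=\pi(w)\,\},$$
that is, the lifts of members of ${\cal F}$ together with all two-element ``same-element'' sets, and the partition $\widetilde{\cal P}=\{\pi^{-1}(P_1),\dots,\pi^{-1}(P_l)\}$ of $V$. Since $|\pi^{-1}(P_j)|=w_S(P_j)\le r$, this $\widetilde{\cal P}$ satisfies the hypothesis of Theorem \ref{thm1}. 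The doubletons are included precisely so that, in the colorability defect, no block may contain two copies of a single element. These doubletons are never $\widetilde{\cal P}$-admissible, because their two vertices lie in a single $\pi^{-1}(P_j)$; hence the vertices of $\mbox{KG}^r(\widetilde{\cal F},\widetilde{\cal P})$ are exactly the lifts of the $\cal P$-admissible members of ${\cal F}$.

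Next I would establish the two comparisons. For the chromatic number: given a proper coloring $c$ of $\mbox{KG}_S^r({\cal F},{\cal P})$ with $t=\chi(\mbox{KG}_S^r({\cal F},{\cal P}))$ colors, the pullback $\widetilde c(X)=c(\pi(X))$ is a proper coloring of $\mbox{KG}^r(\widetilde{\cal F},\widetilde{\cal P})$, because pairwise disjoint lifts $X_1,\dots,X_r$ project to an $S$-disjoint multiset $\{\pi(X_1),\dots,\pi(X_r)\}$, which would then be a monochromatic edge of $\mbox{KG}_S^r({\cal F},{\cal P})$; thus $\chi(\mbox{KG}^r(\widetilde{\cal F},\widetilde{\cal P}))\le t$. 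For the defect: any equitable family $X_1,\dots,X_r$ of pairwise disjoint subsets of $V$ avoiding every member of $\widetilde{\cal F}$ has, thanks to the doubletons, at most one copy of each element in each block, so $A_i:=\pi(X_i)$ satisfies $|A_i|=|X_i|$, contains no $F\in{\cal F}$, and the multiset $\{A_1,\dots,A_r\}$ is $S$-disjoint; projecting an optimal configuration this way gives $\mbox{ecd}^r(\widetilde{\cal F})\ge\mbox{ecd}^r_S({\cal F})$.

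Finally I would chain the inequalities: applying Theorem \ref{thm1} to $\mbox{KG}^r(\widetilde{\cal F},\widetilde{\cal P})$,
$$\chi(\mbox{KG}_S^r({\cal F},{\cal P}))\ge\chi(\mbox{KG}^r(\widetilde{\cal F},\widetilde{\cal P}))\ge\left\lceil\frac{\mbox{ecd}^r(\widetilde{\cal F})}{r-1}\right\rceil\ge\left\lceil\frac{\mbox{ecd}^r_S({\cal F})}{r-1}\right\rceil,$$
which is the assertion. I expect the main obstacle to be orienting the defect comparison correctly: a naive blow-up that allows repeated copies inside one block inflates the optimal equitable configuration and makes $\mbox{ecd}^r(\widetilde{\cal F})$ too small, so one is forced to add the same-element doubletons to $\widetilde{\cal F}$; at the same time these doubletons must be excluded from the vertex set, which is exactly what the $S$-weight condition $w_S(P_j)\le r$ guarantees by keeping all copies of an element within a single partition block.
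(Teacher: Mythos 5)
Your proposal is correct and takes essentially the same route as the paper: both blow up $[n]$ into $\bar{n}$ copies, enlarge the lifted family by the same-element doubletons, lift the partition (whose pieces have size $w_S(P_j)\le r$), and combine the projection homomorphism $\chi(\mbox{KG}_S^r({\cal F},{\cal P}))\ge\chi(\mbox{KG}^r(\widetilde{\cal F},\widetilde{\cal P}))$ with the defect comparison $\mbox{ecd}^r(\widetilde{\cal F})\ge\mbox{ecd}^r_S({\cal F})$ before invoking Theorem \ref{thm1}. The only cosmetic difference is that you take only injective lifts of members of ${\cal F}$ while the paper takes all preimage sets; both choices work, since admissibility (for vertices) and the doubletons (for the defect) force injectivity anyway.
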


\begin{proof} For each $1\le i\le n$,  we make $s_i$ different copies of $i$, say $(i,1),\dots, (i,s_i)$ and make the set $[n]$ into the bigger set $[\bar{n}]$.  So we have a natural map $f:[\bar{n}]\rightarrow [n]$ that sends any copy of $i$ to $i$. We define the lifted family $\bar{\cal F}$ to be all subsets $A$ of $[\bar{n}]$ such that $f(A)\in \cal F$ and also all two-element subsets of $[\bar{n}]$ with two different copies of the same number. Finally, we define a partition $\bar{{\cal P}}=\{\bar{P_1},\dots, \bar{P_l}\}$ by replacing any element $i$ in a partition piece with all of its $s_i$ copies. Hence $|\bar{P_i}|=w_S(P_i)\le r$. Now we claim that $f$ defines a hypergraph homomorphism from $\mbox{KG}^r(\bar{{\cal F}},\bar{\cal P})$ to $\mbox{KG}_S^r({\cal F},{\cal P})$ and hence
$$\chi(\mbox{KG}_S^r({\cal F},{\cal P}))\ge \chi(\mbox{KG}^r(\bar{{\cal F}},\bar{\cal P})).$$
The proof of the claim is straightforward, notice that the special two-element subsets of $\bar{\cal F}$, do not appear as vertices of this hypergraph. It remains to check that $\mbox{ecd}^r_S({\cal F}) \le \mbox{ecd}^r(\bar{\cal F})$, which will finish the proof of the theorem by applying Theorem \ref{thm1}. If $\{A_1,\dots, A_r\}$ is an equitable disjoint family in $[\bar{n}]$ such that no element of $\bar{\cal F}$ is a subset of one of $A_1,\dots, A_r$, then $f(A_1),\dots, f(A_r)$ is an $S$-disjoint equitable family of subsets of $[\bar{n}]$ with $|f(A_i)|=|A_i|$ (note that because of the special two element subsets in $\bar{\cal F}$, each $A_i$ must contain at most one copy from each element). Also, no $F\in {\cal F}$ is a subset of of one of $f(A_1),\dots, f(A_r)$. This implies that  $\mbox{ecd}^r_S({\cal F}) \le \mbox{ecd}^r(\bar{\cal F})$. The theorem is proved.
\end{proof}

\begin{remark}
When $\cal P$ is the trivial partition of $[n]$ into singletons, this result extends the corresponding inequality
$$\chi(\mbox{KG}^r_S({\cal F}))\ge \left\lceil\frac{\mbox{ecd}^r_S({\cal F})}{r-1}\right\rceil.$$
obtained by Abyazi Sani and Alishahi in \cite{AA} with the extra assumption that $s_i<\mu(r)$, where $\mu(r)$ is the largest prime factor of $r$.
\end{remark}

\section{Examples}
 
In this section, we study the Kneser hypergraph of a special family introduced in \cite{AA} and its generalizations. For integers $n>k>a\ge 0$ and $k>s\ge 0$, define ${\cal H}(n,k,a,s)$ to be the family of all $k$-subsets $F \subseteq [n]$ with $F\not\subseteq_s \{n-a+1,...,n\}$ and let $\mbox{KG}^r(n,k,a,s)$ be the $r$-uniform Kneser hypergraph with the vertex set ${\cal H}(n,k,a,s)$ and the edge set of all $r$-subsets $\{F_1,\dots, F_r\}$ of vertices with pairwise intersection of at most $s$ elements. The case $s=0$, was considered by Abyazi Sani and Alishahi in \cite{AA} and was denoted by $\mbox{KG}^r(n,k,a)$.
\begin{remark}\label{rem}
The pair $({\cal H}(n,k,a,s), {\cal P})$  is $\lfloor s/2 \rfloor$-good, if $n\ge rk$, $|P_i|\le r$, $1\le s<k$ and at least $s+1$ of the non-empty partitions of $\cal P$ have empty intersection with $A=\{n-a+1,n-a+2,\dots, n\}$. The reason is as follows. Let $s'=\lfloor s/2 \rfloor$. Assume for a $\cal P$-admissible subset $B$ and an element $F$ in ${\cal H}(n,k,a,s)$ we have $F\subseteq_{s'} B$. Then there is a subset $E$ of size at most $s'$ such that $F\backslash E\subseteq B$. Since $F$ has at least $s+1$ elements outside of $A$, so $F\backslash E$ has $t\ge s+1-s'$ elements outside of $A$. Assume $P_1,\dots, P_{s+1}$ be the partitions with empty intersection with $A$. If $t\ge s+1$, then one can add arbitrarily elements from different partitions that have empty intersection with $F\backslash E$ so it become a $\cal P$-admissible element $F'\in {\cal H}(n,k,a,s)$ with $F'\subseteq_{s'} B$. If $t<s+1$, then $F\backslash E$ has non-empty intersection with at most $t$ of $P_1,\dots, P_{s+1}$, so we can use elements from those $P_1,\dots, P_{s+1}$ with empty intersection with $F\backslash E$ and if needed other partition parts to complete $F\backslash E$ to a $\cal P$-admissible $F'\in {\cal H}(n,k,a,s)$ such that $F'\subseteq_{s'} B$.

\end{remark}

 The following lemma is an extension of a computation made in \cite{AA} for $\mbox{ecd}^r({\cal H}(n,k,a,0))$.

\begin{lemma}
Let $n,k,r,s$, and $a$ be integers with $k,r \geq 2$ and $n \geq rk$, $0\le s<k$, and $n>a+s$. Then, one has
  \begin{equation*}
\mbox{ecd}^r({\cal H}(n,k,a,s),s)=
 \begin{cases}
 \text{$n-r(k-s-1)$} & 
 \text{$a \leq k-s-1$}\\
 \text{$n-r(k-s-1)-\lfloor \frac{a}{k-s} \rfloor$} & \text{$ k-s \le a \leq r(k-s)-2$}\\
 \text{$n-a$} & \text{$a \geq r(k-s)-1$}
 \end{cases}       
 \end{equation*} 
\end{lemma}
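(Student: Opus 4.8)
The plan is to compute $\mbox{ecd}^r({\cal H}(n,k,a,s),s)$ directly from its definition: we must find the minimum size of a set $X_0 \subseteq [n]$ for which there is an equitable partition $[n]\setminus X_0 = X_1 \cup \dots \cup X_r$ avoiding any $F \in {\cal H}(n,k,a,s)$ with $F \subseteq_s X_i$. Write $A = \{n-a+1,\dots,n\}$, so ${\cal H}(n,k,a,s)$ consists of the $k$-subsets $F$ that are \emph{not} $\subseteq_s A$, i.e.\ those with at least $s+1$ elements outside $A$. The key reformulation is that the forbidden condition ``$F \subseteq_s X_i$ for some $F \in {\cal H}(n,k,a,s)$'' is equivalent to the existence of a set of size $k-s$ inside some $X_i$ that is \emph{not} $\subseteq_s A$ — equivalently, $X_i$ contains $k-s$ elements at least one of which (in fact, after accounting for the $s$ deletions) lies outside $A$. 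So I would first translate the constraint into: the partition is valid precisely when no block $X_i$ contains a $(k-s)$-subset having more than a controlled overlap with $A$. The cleanest equivalent statement I expect is that the partition must avoid having any block contain $k-s-1$ elements of $A^c = [n]\setminus A$ together with the right count, so the real combinatorial quantity to control is how the $n-a$ elements outside $A$ and the $a$ elements inside $A$ get distributed among the blocks.

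Next I would handle the three regimes separately. For $a \le k-s-1$ (first case), $A$ is so small that membership in ${\cal H}(n,k,a,s)$ is essentially automatic for any $k$-subset once we have enough room, and I expect the computation to reduce to the generic value $\mbox{ecd}^r({[n]\choose k},s) = n - r(k-s-1)$ from Remark~\ref{rem0}; the presence of $A$ does not help us shrink $X_0$, because every block would need to be kept below size $k-s$ in its non-$A$ part anyway. Concretely, I would show the optimal strategy is to make each $X_i$ have at most $k-s-1$ elements, forcing $|X_1|+\dots+|X_r| \le r(k-s-1)$ and hence $|X_0| \ge n - r(k-s-1)$, and then exhibit a matching equitable partition. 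For the third case $a \ge r(k-s)-1$, $A$ is large enough that we can afford to throw all of $A$ into $X_0$ (or exploit that $A$-heavy blocks are automatically safe because an $F$ confined to such a block would lie $\subseteq_s A$), and the bound $n-a$ should come from discarding exactly the $a$ elements of $A$ and distributing the remaining $n-a$ elements of $A^c$ so thinly that no block reaches $k-s$ of them. The middle case $k-s \le a \le r(k-s)-2$ is the genuinely interesting interpolation, where we can profitably place $\lfloor a/(k-s)\rfloor$ blocks entirely (or almost entirely) inside $A$ — each such block can safely hold up to $k-s$ elements of $A$ without creating a forbidden $F$ — thereby salvaging those elements from $X_0$ and improving the generic bound by exactly the extra $\lfloor a/(k-s)\rfloor$.

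For each regime the argument has two halves: an upper bound via an explicit construction of $X_0$ and the equitable partition, and a matching lower bound showing no smaller $X_0$ can work. I would build the construction by first filling as many blocks as possible with $k-s$ elements drawn from $A$ (these blocks are automatically admissible since any $k$-subset landing in them after $\le s$ deletions stays $\subseteq_s A$), then filling the remaining blocks with at most $k-s-1$ elements each from $A^c$, and finally putting whatever is left — a set whose size gives the claimed value — into $X_0$; the equitability condition $\big||X_i|-|X_j|\big|\le 1$ then has to be checked and may force minor rebalancing of the kind already used in the proof of Lemma~\ref{ind}. The lower bound is a counting argument: each block can contribute at most $k-s$ to $\sum|X_i|$ when it sits inside $A$, but at most $k-s-1$ otherwise, and at most $\lfloor a/(k-s)\rfloor$ blocks can be of the first type because there are only $a$ elements in $A$, so $\sum|X_i|$ is bounded above by $(k-s)\lfloor a/(k-s)\rfloor + (r - \lfloor a/(k-s)\rfloor)(k-s-1)$ (capped appropriately in the extreme regimes), which rearranges to the stated formula.

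The main obstacle I anticipate is pinning down the exact interaction between the ``$\subseteq_s$'' relaxation and the equitability constraint in the middle regime, where both the floor $\lfloor a/(k-s)\rfloor$ and the equitable balancing are active simultaneously: I must verify that the $A$-filled blocks of size $k-s$ and the $A^c$-filled blocks of size $k-s-1$ can be reconciled with $\big||X_i|-|X_j|\big|\le 1$ without sacrificing any of the gain, and that the lower-bound counting genuinely cannot do better than $\lfloor a/(k-s)\rfloor$ salvaged blocks — in particular that no clever mixed block (part $A$, part $A^c$) beats the pure strategies. I would expect the flooring to arise precisely because a mixed block holding $j$ elements of $A$ and $k-s-j$ of $A^c$ is still forbidden unless $j = k-s$, so only fully-$A$ blocks help, which forces the integer division and yields the floor cleanly.
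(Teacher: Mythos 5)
Your treatment of the first two cases is broadly in line with the paper's proof: for $a\le k-s-1$ one has ${\cal H}(n,k,a,s)={[n]\choose k}$ exactly (any $k$-set $F$ with $F\subseteq_s A$ would force $a\ge k-s$), so Remark \ref{rem0} applies, and your middle-case construction ($\lfloor a/(k-s)\rfloor$ blocks that are $(k-s)$-subsets of $A$, the remaining blocks of size $k-s-1$, which is automatically equitable) is exactly the paper's. However, your lower-bound counting rests on a false per-block claim: it is \emph{not} true that a block sitting inside $A$ ``can contribute at most $k-s$.'' A block $X_i\subseteq A$ is safe no matter how large it is, since $F\subseteq_s X_i\subseteq A$ gives $F\subseteq_s A$, i.e.\ $F\notin{\cal H}(n,k,a,s)$. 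The cap of $k-s$ on \emph{all} blocks in the middle regime is precisely what the paper proves first, via a global equitability argument that you never supply: if some block had size $\ge k-s+1$, equitability forces every block to have size $\ge k-s$; since the blocks then cannot all lie inside $A$ (their total size exceeds $a$), some block of size $\ge k-s$ meets $[n]\setminus A$, and a $(k-s)$-subset of it containing a point outside $A$ can be completed (using $n>a+s$ to supply $s$ further elements outside $A$) to an $F\in{\cal H}(n,k,a,s)$ with $F\subseteq_s X_i$, a contradiction. Without this step the floor formula is unproved; with it, your counting goes through. Relatedly, your ``key reformulation'' wavers between two inequivalent conditions: the correct one is that some block contains a $(k-s)$-subset with \emph{at least one} element outside $A$ (equivalently, some block neither has size $\le k-s-1$ nor lies inside $A$), not a $(k-s)$-subset failing $\subseteq_s A$.

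The third case is where the proposal genuinely fails, and it is exactly where the false cap bites. Your construction is backwards: putting $A$ into $X_0$ and spreading the $n-a$ elements of $[n]\setminus A$ thinly over the blocks gives $|X_0|=a$, not $n-a$, and is not even feasible once $n-a>r(k-s-1)$ (e.g.\ $r=2$, $k=3$, $s=1$, $a=3$, $n=10$). The correct construction is the opposite one, hinted at only in your parenthesis: take $X_0=[n]\setminus A$ and let $X_1,\dots,X_r$ be an equitable partition of $A$ itself; these blocks may be far larger than $k-s$ and are still safe. This also shows your counting bound is false in this regime: it would assert $|X_0|\ge n-r(k-s)$, which for $a>r(k-s)$ strictly exceeds the true value $n-a$. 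Finally, the matching lower bound $|X_0|\ge n-a$ needs its own argument, which you do not address: in the paper, if $|X_0|<n-a$ then $\sum_i|X_i|>a$, and the middle-case facts (every block has size at most $k-s$, with equality only for blocks inside $A$) then yield a forbidden block in both subcases $a\ge r(k-s)$ and $a=r(k-s)-1$. As it stands, the proposal proves the upper bound in cases 1 and 2, but the lower bound in case 2 and the entirety of case 3 are missing or incorrect.
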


 \begin{proof} Let $A=\{n-a+1,\dots, n\}$. We prove each case separately.
 
 \begin{enumerate}
\item In the first case, ${\cal H}(n,k,a,s)$ is $[n]\choose k$ of all $k$-subsets of $[n]$ and it follows from Remark \ref{rem0}. 
\item In the second case, let $X_0,X_1,...,X_r$ be a partition of $[n]$ such that as in the definition the generalized $r$-colorability defect, there are no $F\in {\cal H}(n,k,a,s)$ such that $F\subseteq_s X_i$ for some $i=1,\dots, r$. We show that $|X_i| \leq k-s$ for $1\leq i\leq r$ and if $|X_i|=k-s$ then $X_i\subseteq A$. Assume that $|X_1| \geq k-s+1$, and since the partition is equitable $|X_i| \geq k-s$ for $1\le i\le r$. Hence, there exist $1\leq i \leq r$ such that $X_i \not\subseteq A$. Let  $X'_i$ be a $k-s$ subset of $X_i$ such that $X'_i \not\subseteq A$. Since $n>a+s$ there exist at least $s$ elements in $[n]\backslash A$ so we can extend $X'_i$ to a k-subset $F$ such that $F\not\subseteq_s A$ and so $F\in {\cal H}(n,k,a,s)$ , $F \subseteq_s X_i$ which violates the assumption on the partition. From the previous argument one can deduce the fact that $|X_i|= k-s$ can only happen when $X_i \subseteq A$. Based on these facts:
$$\mbox{ecd}^r({\cal H}(n,k,a,s),s) \geq n- (k-s) \lfloor \frac{a}{k-s} \rfloor - (k-s-1)(r-\lfloor \frac{a}{k-s} \rfloor) $$
$$= n-r(k-s-1)-\lfloor \frac{a}{k-s} \rfloor$$
This bound is sharp since you can find such an equitable partition by taking $\lfloor \frac{a}{k-s}\rfloor$ disjoint $(k-s)$-subsets of $A$ as $X_1,...,X_{\lfloor \frac{a}{k-s}\rfloor}$  and $r-\lfloor \frac{a}{k-s}\rfloor$ arbitrary disjoint $(k-s-1)$-subsets of the remaining elements as other $X_i$' s.\\
\item In the third case, If $X_0=[n]\backslash A$ and $X_1,\dots, X_r$ be a equitable partition of $A$, then clearly there is no $F\in {\cal H}(n,k,a,s)$ such that $F\subseteq_s X_i$ for some $1\le i\le r$. If $|X_0|<n-a$ then $|X_1\cup\dots\cup X_r|>a$. If $a\ge r(k-s)$, hence at least one $X_i$ has a size of at least $k-s+1$, which is not possible by the argument in the previous step. If $a=r(k-s)-1$, then we must have $|X_i|=k-s$ for all $i$ and hence $X_i\subseteq A$. This is not possible either, because it implies that $a\ge r(k-s)$. So $\mbox{ecd}^r$ is $n-a$.
\end{enumerate}
 \end{proof}
 
\begin{theorem}\label{pre} 
Let $n,k,r,s$ and $a$ be integers with $k,r \geq 2 \ , \ n>a\ge 0$, $n \geq rk$, $0\le s<k$, and $a\le r(k-s-1)$. Then, one has
$$\chi(\mbox{KG}^r(n,k,a,s)) \geq \left\lceil\frac{n-r(k-\lfloor s/2\rfloor -1)}{r-1}\right \rceil.$$
\end{theorem}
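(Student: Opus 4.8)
The plan is \emph{not} to feed ${\cal H}(n,k,a,s)$ straight into Theorem \ref{thm2}. Although $\mbox{KG}^r(n,k,a,s)=\mbox{KG}^r({\cal H}(n,k,a,s),s)$, the defect $\mbox{ecd}^r({\cal H}(n,k,a,s),\lfloor s/2\rfloor)$ is in general strictly smaller than $n-r(k-\lfloor s/2\rfloor-1)$: one may place an almost-full equitable block inside $A=\{n-a+1,\dots,n\}$ (such a block contains no $F\in{\cal H}$ since every $F\subseteq_{\lfloor s/2\rfloor}$-close to it has fewer than $s+1$ elements outside $A$), which shaves off a term of order $\lfloor a/(k-s)\rfloor$. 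So that route loses exactly what we want to keep. Instead I would re-present the \emph{same} hypergraph through the full family ${[n]\choose k}$ equipped with a partition, exploiting that ${[n]\choose k}$ has the larger defect $n-r(k-\lfloor s/2\rfloor-1)$.

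Concretely, since $a\le r(k-s-1)$, I would split $A$ into $k-s-1$ blocks $Q_1,\dots,Q_{k-s-1}$ of size at most $r$ and leave each element of $[n]\setminus A$ as a singleton; call this partition ${\cal P}$, so that $|P_i|\le r$ for every part. (If $k-s-1=0$ then $a=0$, $A=\emptyset$, and ${\cal P}$ is the trivial partition, which is the already-settled case.) The decisive observation is that a ${\cal P}$-admissible $k$-subset $F$ satisfies $|F\cap A|=\sum_i |F\cap Q_i|\le k-s-1$, hence $|F\setminus A|\ge s+1$, i.e. $F\not\subseteq_s A$ and therefore $F\in{\cal H}(n,k,a,s)$. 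Thus every vertex of $\mbox{KG}^r({[n]\choose k},{\cal P},s)$ is a vertex of $\mbox{KG}^r(n,k,a,s)$, and since both hypergraphs use the identical edge condition $|F_i\cap F_j|\le s$, inclusion is a hypergraph homomorphism. This gives $\chi(\mbox{KG}^r(n,k,a,s))\ge \chi(\mbox{KG}^r({[n]\choose k},{\cal P},s))$.

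Next I would invoke Theorem \ref{main'} on the pair $({[n]\choose k},{\cal P})$. By Remark \ref{rem1}, applied with the parameter $\lfloor s/2\rfloor$ in place of $s$, this pair is $\lfloor s/2\rfloor$-good: indeed $n\ge rk\ge r(k-1)+1$, one has $0\le\lfloor s/2\rfloor<k$, and $|P_i|\le r$. Theorem \ref{main'} then yields $\chi(\mbox{KG}^r({[n]\choose k},{\cal P},s))\ge \left\lceil \mbox{ecd}^r({[n]\choose k},\lfloor s/2\rfloor)/(r-1)\right\rceil$, and Remark \ref{rem0} (valid since $n\ge r(k-1)+1$ and $0\le\lfloor s/2\rfloor<k$) evaluates the defect to $n-r(k-\lfloor s/2\rfloor-1)$. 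Chaining the two inequalities produces the claimed bound.

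I expect the genuinely nonobvious step to be the very first one: realizing that the bound must be extracted by running the machinery on the \emph{full} family together with a partition, not on the restricted family ${\cal H}$ with the trivial partition. This is also precisely where the hypothesis $a\le r(k-s-1)$ is consumed, since it is exactly the condition allowing $A$ to be partitioned into $k-s-1$ parts of size at most $r$, which is what forces admissible subsets to meet $A$ in at most $k-s-1$ points and hence to land in ${\cal H}$. The remaining verifications (the admissibility computation, the homomorphism property, and the degenerate boundary case $s=k-1$) are routine.
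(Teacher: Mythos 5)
Your proposal is correct and follows essentially the same route as the paper: both reduce to the full family ${[n]\choose k}$ with a partition $\cal P$ whose parts cover $A$ by $k-s-1$ blocks of size at most $r$ (using $a\le r(k-s-1)$), observe that $\cal P$-admissibility forces $F\not\subseteq_s A$ so that $\mbox{KG}^r({[n]\choose k},{\cal P},s)$ is a sub-hypergraph of $\mbox{KG}^r(n,k,a,s)$, and then apply Theorem \ref{main'} together with Remarks \ref{rem0} and \ref{rem1}. Your added commentary on why the direct application of Theorem \ref{thm2} to ${\cal H}(n,k,a,s)$ loses a term of order $\lfloor a/(k-s)\rfloor$ is consistent with the paper's defect computation and does not change the argument.
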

\begin{proof}
Let $A=\{n-a+1 , ... , n\}$. Take a partition ${\cal P} =\{ P_1,...,P_l \}$ of $[n]$ such that $|P_i|=r$ for all $1\le i\le k-s-1$ and $|P_i|\le r$ otherwise, and $A \subseteq \bigcup^{k-s-1}_{i=1} P_i$. Now, $\mbox{KG}^r({[n] \choose k},{\cal P},s)$ is a sub-hypergraph of $\mbox{KG}^r(n,k,a,s)$, because if a $k$-subset $F$ is $\cal P$-admissible then it contains at most $(k-s-1)$ elements from $A$ and hence $F\not\subseteq_s A$. The result follows now from Theorem \ref{main'}. Recall that  $\mbox{ecd}^r({[n]\choose k},s)=n-r(k-s-1)$, and by Remark \ref{rem1}, the pair $({[n]\choose k},{\cal P})$ is $\lfloor s/2 \rfloor$-good.
\end{proof}

{\textbf{Remark.}} The above theorem, for the case when $s=0$, was conjectured in \cite{AA}. They showed that it is true when $a\le 2(k-1)$. This was generalized by Aslam, Chen, Coldren, Frick, and Seitanrata in \cite{F} for $a\le b(r)(k-1)$, where $b(r)$ for the prime decomposition $r=2^{\alpha_0}p_1^{\alpha_1}\dots p_m^{\alpha_m}$ is defined to be $2^{\alpha_0}(p_1-1)^{\alpha_1}\dots (p_m-1)^{\alpha_m}$. Our theorem hence, is a generalization of these results.
\\
 
 The following hypergraph is considered in \cite{F}.  Let $\mbox{KG}^r(n,k,{\cal P})_{t-wide}$ be the sub-hypergraph of $\mbox{KG}^r(n,k,{\cal P})$ induced by the vertices that are not contained in any one of the sets $\{i,i+1,...,i+t-1\}$ for $i \in [n-t+1]$. The following theorem is proved in \cite{F}.
 \begin{theorem}
Let $k \geq 1$ be an integer, $r \geq 2$ a prime, and $n \geq rk$ an integer. Let ${\cal P} = \{P_1,...,P_l\}$ be a partition of $[n]$ with $|P_i| \leq r-1$. Let $t \leq r(k-3)+2$. Then  
$$\chi(\mbox{KG}^r(n,k,{\cal P})_{t-wide})=\left\lceil \frac{n-r(k-1)}{r-1} \right\rceil.$$
\end{theorem}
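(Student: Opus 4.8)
The plan is to prove the equality by establishing the two inequalities separately: the upper bound is routine, and the lower bound carries essentially all of the difficulty.

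For the upper bound $\chi(\mbox{KG}^r(n,k,{\cal P})_{t-wide})\le \lceil (n-r(k-1))/(r-1)\rceil$ I would simply restrict the standard coloring of the ordinary Kneser hypergraph. Put $c=\lceil (n-r(k-1))/(r-1)\rceil$ and color a $k$-set $F$ by $\min\{\lceil \min(F)/(r-1)\rceil,\, c\}$, i.e. by the capped block-index of its least element. Each of the first $c-1$ color classes consists of $k$-sets whose minimum lies among $r-1$ fixed values, so any $r$ of them must repeat a minimum and hence two of them meet; the last class consists of $k$-sets inside a window of $n-(c-1)(r-1)\le rk-1<rk$ elements (this is exactly where the formula for $c$ is used), so it cannot contain $r$ pairwise disjoint sets either. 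Thus this is a proper coloring of $\mbox{KG}^r({[n]\choose k})$, and since $\mbox{KG}^r(n,k,{\cal P})_{t-wide}$ is obtained only by deleting vertices and edges, its restriction is still proper; monotonicity of $\chi$ under passing to a sub-hypergraph gives the bound.

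For the lower bound my first move would be to apply Theorem \ref{thm1} to the family ${\cal F}$ of all $t$-wide $k$-subsets of $[n]$, observing that $\mbox{KG}^r(n,k,{\cal P})_{t-wide}$ is precisely $\mbox{KG}^r({\cal F},{\cal P})$ (since $s=0$, the admissible vertices and the pairwise-disjoint edges match the definition). When $t\le k-1$ every $k$-set has spread at least $k-1\ge t$, so ${\cal F}={[n]\choose k}$, $\mbox{ecd}^r({\cal F})=n-r(k-1)$, and Theorem \ref{thm1} already finishes the case. The genuine difficulty is $t\ge k$, where this route is too weak: the adversary in the defect may take $r$ disjoint intervals of length $t$, none of which contains a $t$-wide $k$-subset, which shows $\mbox{ecd}^r({\cal F})\le n-rt<n-r(k-1)$. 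Hence Theorem \ref{thm1} cannot reach $\lceil (n-r(k-1))/(r-1)\rceil$ once $t\ge k$, and the colorability-defect machinery of this paper does not by itself deliver the result.

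Consequently, for $t\ge k$ I would return to the $\mathbb Z_p$-Tucker lemma (Lemma \ref{tucker}) directly, with $r=p$ prime as assumed, and try to build a labeling $\lambda$ of $\mbox{E}_{n-1}(\mathbb Z_p)$ with threshold $\alpha=r(k-1)$ and $m=\alpha+N$, where $N=\chi(\mbox{KG}^r(n,k,{\cal P})_{t-wide})$; the conclusion $\alpha+(m-\alpha)(p-1)\ge n$ then yields $N\ge (n-r(k-1))/(r-1)$. Case $1$ would fire whenever some layer $A^i$ contains a $t$-wide, $\cal P$-admissible $k$-subset, setting $\lambda(A)=(i,c(F)+\alpha)$ for the least such $F$; equivariance and the ``no monochromatic edge'' check then go exactly as in the proof of Theorem \ref{main}. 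The whole problem is concentrated in Case $2$ and in justifying $\alpha=r(k-1)$: I must show that whenever no layer contains a $t$-wide admissible $k$-subset, the balanced quantity $\lambda_2(A)=p|B^{i_1}|+p-h$ stays $\le r(k-1)$, i.e. essentially that the smallest layer has at most $k-1$ elements. This is false for the naive size-based recipe, since an interval layer of length up to $t$ avoids $t$-wide $k$-subsets while being large — the same obstruction seen above. The fix must replace the ``largest admissible subset'' rule by one sensitive to the cyclic/interval structure, so that a configuration spread across the $r$ layers is forced to produce a wide set in one layer once the total exceeds $r(k-1)$. I expect this to be exactly where the hypotheses $|P_i|\le r-1$ and $t\le r(k-3)+2$ are consumed (the ``$k-3$'' and ``$+2$'' providing the slack for the threshold bookkeeping), and this step — essentially the specialized wide-coloring argument of \cite{F} — is the main obstacle.
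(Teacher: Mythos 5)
Your proposal does not prove the theorem: you establish the upper bound (the capped standard coloring restricted to a sub-hypergraph, which is fine) and the lower bound only in the regime $t\le k-1$, where the $t$-wide family coincides with ${[n]\choose k}$ and Theorem \ref{thm1} applies directly. For the substantive range $k\le t\le r(k-3)+2$ (nonempty for every $k\ge 4$, e.g.\ $r=3$, $k=4$, $t=5$) you correctly diagnose why Theorem \ref{thm1} applied to the wide family must fail (its equitable defect collapses to at most $n-rt$ once $t\ge k$), but your fallback --- a direct $\mathbb Z_p$-Tucker labeling with threshold $\alpha=r(k-1)$ --- is abandoned at exactly the decisive point: you concede that the naive Case 2 rule violates $\lambda_2(A)\le\alpha$, and that some unspecified ``fix sensitive to the cyclic/interval structure'' is needed, deferring to the wide-coloring argument of \cite{F}. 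A proof that names its own missing step is a gap, not a proof; nothing in your text forces the conclusion in that range, and the hypotheses $|P_i|\le r-1$ and $t\le r(k-3)+2$ are never actually used.

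Two contextual points. First, this particular statement is not proved in the paper at all: it is quoted verbatim from \cite{F}, whose argument is topological (the paper's final remark notes that \cite{F} bounds a \emph{topological} $r$-colorability defect of the wide family by $n-r(k-1)$ even though, as both you and the paper observe, the equitable defect drops to $n-rt$ --- precisely the obstruction you ran into). Second, the theorem the paper does prove in this section (trivial partition, $t\le r(k-2)+1$, arbitrary $r$ rather than prime) uses a trick you did not consider: apply Theorem \ref{thm1} to the \emph{full} family ${[n]\choose k}$ together with the block partition $P_i=\{(i-1)r+1,\dots,ir\}$; block-admissible $k$-sets automatically span at least $r(k-2)+2$ consecutive integers, hence are $t$-wide, so $\mbox{KG}^r(n,k,{\cal P})$ is a sub-hypergraph of $\mbox{KG}^r(n,k)_{t-wide}$ and the defect entering the bound is $n-r(k-1)$, not the defect of the wide family. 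That trick would have rescued your argument in the trivial-partition case; it does not, however, obviously extend to the stated theorem's arbitrary partition with $|P_i|\le r-1$, since one would need a single partition with parts of size at most $r$ whose admissibility implies both ${\cal P}$-admissibility and $t$-wideness --- which is presumably why the paper cites \cite{F} rather than reproving this result.
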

In some special cases we can improve their result. 
 \begin{theorem}
Let $k \geq 1$ be an integer $r \geq 2$, and $n \geq rk$ an integer. Let $t \leq r(k-2)+1$ then  
$$\chi(\mbox{KG}^r(n,k)_{t-wide}) =\left\lceil \frac{n-r(k-1)}{r-1} \right\rceil.$$
\end{theorem}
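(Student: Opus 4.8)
The plan is to prove the claimed equality by establishing the two inequalities separately. Write $\cal F$ for the family of all \emph{$t$-wide} $k$-subsets of $[n]$, that is, the $k$-subsets $F$ with $\max F-\min F\ge t$ (equivalently, those not contained in any window of $t$ consecutive integers). With the trivial partition and $s=0$ we then have $\mbox{KG}^r(n,k)_{t-wide}=\mbox{KG}^r({\cal F})$. The lower bound will be extracted from Theorem \ref{thm1} by exhibiting a suitable partition, and the matching upper bound will be inherited from the full Kneser hypergraph.

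For the lower bound, the one genuinely creative step is to produce a partition $\cal P$ of $[n]$ with $|P_i|\le r$ all of whose admissible $k$-subsets are automatically $t$-wide. I would take $\cal P$ to be the partition into consecutive blocks of length $r$, namely $P_i=\{(i-1)r+1,\dots,ir\}$ with the last block possibly shorter; this is legitimate since $n\ge rk$ yields at least $k$ blocks and each $|P_i|\le r$. The key computation is that any $\cal P$-admissible $k$-set $F$ meets $k$ distinct blocks $i_1<\dots<i_k$ with $i_k-i_1\ge k-1$, so
$$\max F-\min F\ge\big((i_k-1)r+1\big)-i_1 r=(i_k-i_1-1)r+1\ge r(k-2)+1.$$
Since by hypothesis $t\le r(k-2)+1$, every $\cal P$-admissible $k$-subset has span $\ge t$ and is therefore $t$-wide. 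Consequently $\mbox{KG}^r({[n]\choose k},{\cal P})$ is a sub-hypergraph of $\mbox{KG}^r(n,k)_{t-wide}$ (its vertices are $t$-wide and both hypergraphs impose the same disjointness rule on edges), so every proper colouring of the latter restricts to one of the former. Applying Theorem \ref{thm1} with ${\cal F}={[n]\choose k}$, together with $\mbox{ecd}^r({[n]\choose k})=n-r(k-1)$ from Remark \ref{rem0} (valid because $n\ge rk>r(k-1)$), gives
$$\chi(\mbox{KG}^r(n,k)_{t-wide})\ge\chi\big(\mbox{KG}^r({[n]\choose k},{\cal P})\big)\ge\left\lceil\frac{n-r(k-1)}{r-1}\right\rceil.$$

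For the upper bound I would note that $\mbox{KG}^r(n,k)_{t-wide}$ is a sub-hypergraph of the full Kneser hypergraph $\mbox{KG}^r({[n]\choose k})$, whose chromatic number equals $\lceil (n-r(k-1))/(r-1)\rceil$ by the theorem of Alon, Frankl, and Lov\'asz \cite{AFL}. The upper colouring is explicit: setting $J=\lceil (n-r(k-1))/(r-1)\rceil$, colour each $k$-set $F$ by $c(F)=\min\{\lceil \min F/(r-1)\rceil,\,J\}$. Two disjoint sets of a common colour $j<J$ would have their (necessarily distinct) minima inside a block of $r-1$ consecutive integers, so no $r$ of them can be pairwise disjoint; and all sets of colour $J$ lie in $\{(J-1)(r-1)+1,\dots,n\}$, a set of fewer than $rk$ elements (this is where $n\ge rk$ is used), hence cannot contain $r$ pairwise disjoint $k$-sets. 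Restricting this colouring to the $t$-wide vertices yields $\chi(\mbox{KG}^r(n,k)_{t-wide})\le J$, and combining with the lower bound proves the equality.

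The main obstacle is the span computation of the second paragraph; everything else is a direct appeal to Theorem \ref{thm1} and to the classical Kneser chromatic number. It is precisely the use of blocks of full size $r$ (rather than size $r-1$) that produces the threshold $r(k-2)+1$ and thereby enlarges the admissible range of $t$ beyond the previously known bound $t\le r(k-3)+2$.
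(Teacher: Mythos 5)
Your proposal is correct and takes essentially the same approach as the paper: the identical partition of $[n]$ into consecutive blocks of length $r$, the same span computation showing every $\cal P$-admissible $k$-set is $t$-wide (so that $\mbox{KG}^r({[n]\choose k},{\cal P})$ is a sub-hypergraph of $\mbox{KG}^r(n,k)_{t-wide}$), and the same appeal to Theorem \ref{thm1} with $\mbox{ecd}^r({[n]\choose k})=n-r(k-1)$ for the lower bound. The only cosmetic difference is that you spell out the standard Kneser coloring explicitly for the upper bound, where the paper simply invokes it.
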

\begin{proof}
Let $l=\lceil \frac{n}{r} \rceil$ and ${\cal P}=\{P_1,...,P_l\}$ be a partition of $[n]$ such that $$P_i=\{(i-1)r+1 , ... , ir\}$$ for $1\leq i \leq l-1.$
Then, $\mbox{KG}^r(n,k,{\cal P})$ is a sub-hypergraph of $\mbox{KG}^r(n,k)_{t-wide}$, because suppose that a $\cal P$-admissible $k$-subset $F$ is a subset of $X=\{i,i+1,\dots, i+t-1\}$. Then, the smallest value that $t$ can have is when $i\in F$ is the last element of some $P_j$ and $(i+t-1)\in F$ is the first element of $P_{j+k-1}$ and $P_{j+1},\dots, P_{j+k-2}$ are subsets of $X$, that is $t\ge r(k-2)+2$, which contradicts the assumption on $t$. The result then follows from Theorem \ref{thm1} and the standard coloring of the Kneser hypergraph $\mbox{KG}^r(n,k)$.
\end{proof}
\begin{remark}
The family of $t$-wide subsets are very interesting examples to compare the colorability defects for them. It is proved in \cite{F} that the topological $r$-colorability defect of this family for $t\le r(k-3)+2$ is at least $n-r(k-1)$ but we will show that if $n >\max{\{ rt, r(k-1)\}}$ then,
  \begin{equation*}
\mbox{ecd}^r({[n] \choose k}_{t-wide})=
 \begin{cases}
 \text{$n-r(k-1)$} & 
 \text{$t \leq k$}\\
 \text{$n-rt$} & \text{$t > k $}\\
 \end{cases} 
.      
 \end{equation*} 
 Therefore, there exist examples where the topological colorability defect is better than the equitable colorability defect. 
 \end{remark}
\begin{proof}
 In the  first case, the family of $t$-wide $k$-subsets is the same as the family of all $k$-subsets and the result follows by Remark \ref{rem0}. In the second case, let $X_0,X_1,...,X_r$ be a partition of $[n]$ such that no $F\in {[n]\choose k}_{t-wide}$ is a subset of one of $X_1,\dots, X_r$ . Note that for all $1 \leq i \leq r$,  one has  $|X_i| \leq t$ . Because otherwise, take a $k$-subset of $X_i$ that contains its smallest and its biggest elements. This subset is a $t$-wide $k$-subset inside $X_i$ and therefore violates the assumption on the partition. This shows $|X_0| \ge n-rt$. Finally, since the partition given by $X_i=\{(i-1)t+1 , ... , it\}$ for $1 \leq i \leq r$ with $|X_0|=n-rt$ has the property that no $t$-wide $k$-subset is inside one of $X_1,\dots, X_r$, the claim follows.
\end{proof}
It is an interesting problem to see that if it is true that the topological $r$-colorability defect of Frick for a family of subsets is always greater than or equal to the equitable $r$-colorability defect.

\end{document}